\newtheorem{theorem}{Theorem}[section]
\newtheorem{lemma}[theorem]{Lemma}
\newtheorem{proposition}[theorem]{Proposition}
\newtheorem{corollary}[theorem]{Corollary}
\theoremstyle{definition}
\theoremstyle{remark}
\newtheorem{remark}[theorem]{Remark}
\numberwithin{equation}{section}
\newcommand{\ER}{Erd\H{o}s-R\'enyi~}
\newcommand{\ic}{\mathrm{i}}
\newcommand{\RE}{\mathfrak{Re}}
\DeclareMathSymbol{I}{\mathalpha}{operators}{`I}
\newcommand{\spec}{\mathrm{Spec}}
\newcommand{\diag}{\mathrm{diag}}
\begin{document}

\title{Eigenvalues of the non-backtracking operator detached from the bulk}
\author{Simon Coste}
\address{INRIA Paris, DYOGENE team \\ Office C330} 
\email{simon.coste@inria.fr}

\author{Yizhe Zhu}
\address{Department of Mathematics, University of California, San Diego, La Jolla, CA 92093}
\email{yiz084@ucsd.edu}

%
\date{\today}
\keywords{non-backtracking operator, stochastic block model, non-Hermitian perturbation, quadratic eigenvalue problem}
\thanks{Y.Z. is partially supported by NSF DMS-1712630.}

\begin{abstract}  
We describe the non-backtracking spectrum of a stochastic block model with connection probabilities $p_{\mathrm{in}}, p_{\mathrm{out}} = \omega(\log n)/n$. In this  regime  we answer a question posed in \cite{dall2019optimized} regarding the existence of a real  eigenvalue `inside' the bulk, close to the location  $\frac{p_{\mathrm{in}}+ p_{\mathrm{out}}}{p_{\mathrm{in}}- p_{\mathrm{out}}}$. We also introduce a variant of the Bauer-Fike theorem well suited for perturbations of quadratic eigenvalue problems, and which could be of independent interest.
\end{abstract}

\maketitle

\section{Introduction}

For any real matrix $A$ with size $n \times n$, its non-backtracking operator $B$ is the real matrix  {indexed by the coordinates of the non-zero entries} of $A$, and is defined by 
\begin{equation}\label{def:B}
B_{(i,j), (k,\ell)} = A_{k,\ell} \mathbf{1}_{j=k, i \neq \ell}. 
\end{equation}
The non-backtracking matrix of a graph is the non-backtracking matrix of its adjacency matrix, and it is closely related to the Zeta function of the graph \cite{terras2010zeta}. Its spectrum was first studied in the case of  finite graphs and their universal covers \cite{hashimoto1989zeta,bass1992ihara,stark1996zeta,kotani2000zeta,terras2010zeta,angel2015non}. Recently, the non-backtracking operator attracted a lot of attention from random graph theory as a very powerful tool. In the spectral theory of random graphs, it was a key element in a new proof of the Alon-Friedman theorem for random regular graphs \cite{bordenave2015new}. In the same vein it has been used later to study the eigenvalues of random regular hypergraphs \cite{dumitriu2019spectra}, random bipartite biregular graphs \cite{brito2018spectral} and homogeneous or inhomogeneous Erd\H{o}s-R\'{e}nyi graphs \cite{krzakala2013spectral,wang2017limiting,bordenave2018nonbacktracking,gulikers2017non, benaych2017spectral, alt2019extremal, benaych2019largest}. Very recently, the real eigenvalues were used to prove estimates on the vector-colouring number of a graph \cite{2019arXiv190702539B}. 

Most of the results focus on the   {eigenvalues of large magnitude}, those which lie outside the bulk of the spectrum. They are known to be the `most informative' eigenvalues, as they capture some essential features about the structure of the graph. For instance, in community detection, the appearance of certain outliers indicates when the community structure can be recovered \cite{bordenave2018nonbacktracking, krzakala2013spectral}. A cornerstone result was that even in the difficult \textit{dilute} case, where the connection probabilities are of order $1/n$, reconstruction was feasible (under some condition) by looking at the eigenvalues of the non-backtracking matrix appearing outside the `bulk' of eigenvalues.

\begin{figure}\centering
\includegraphics[width=0.45\textwidth]{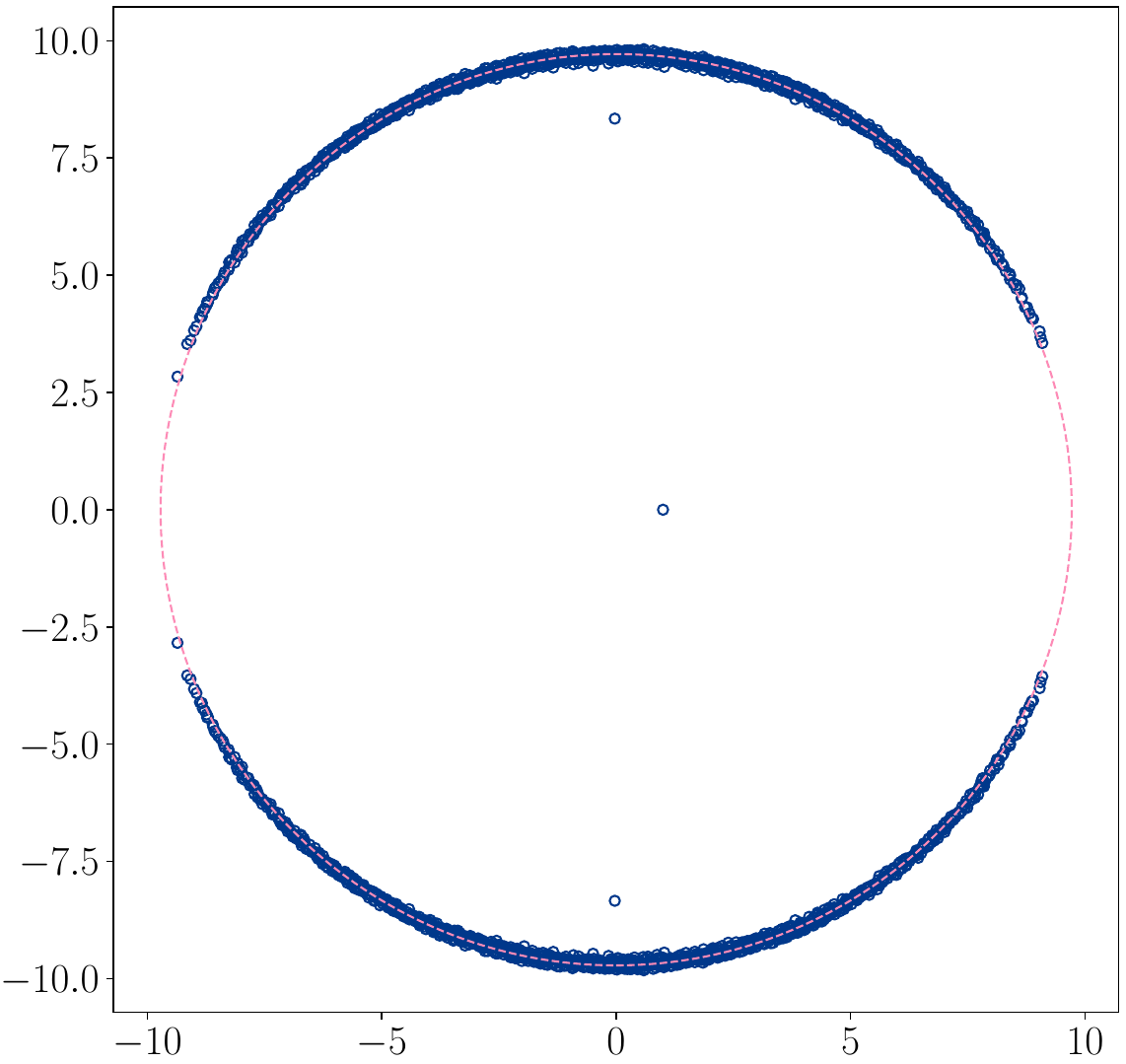}
\includegraphics[width=0.45\textwidth]{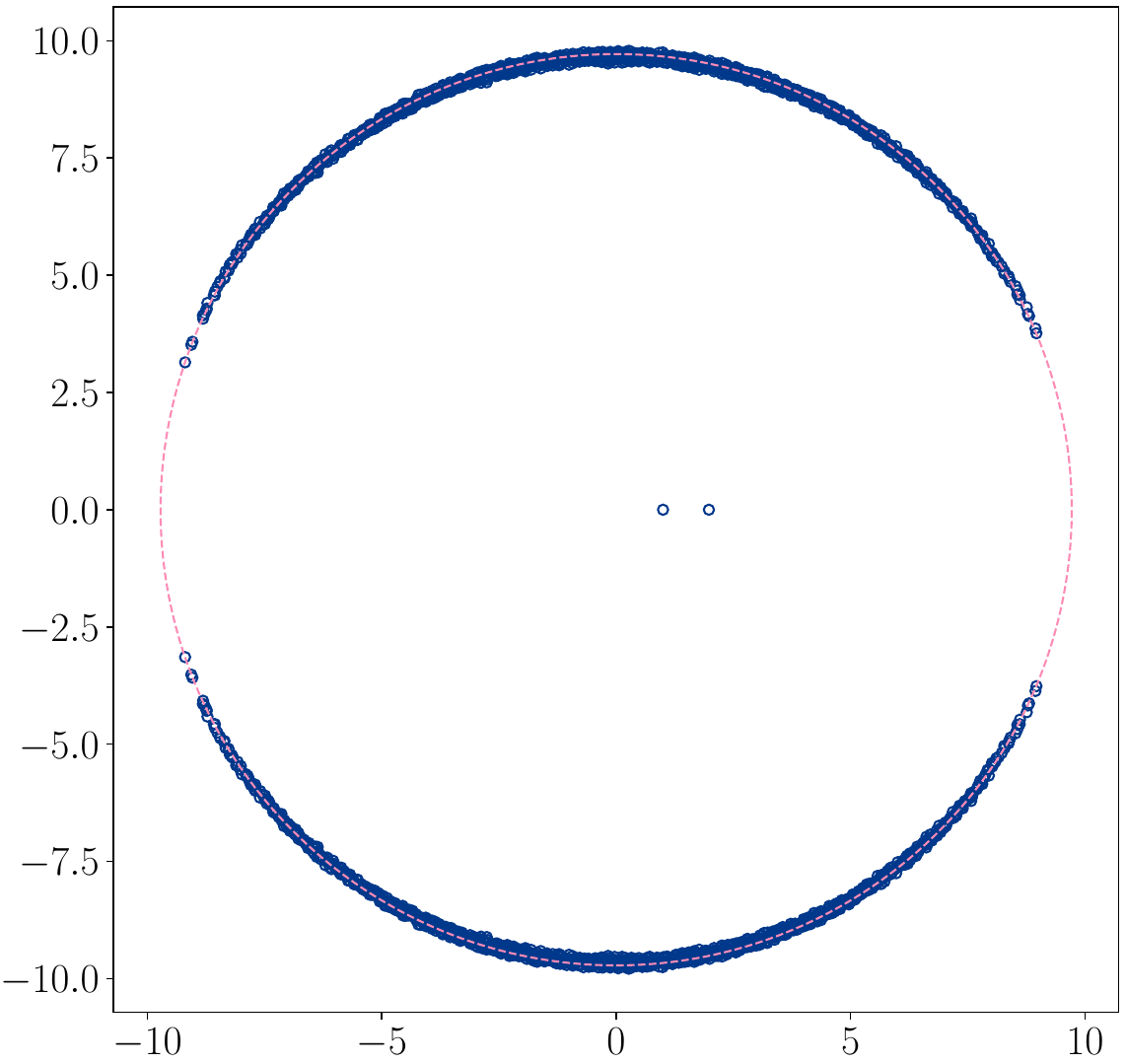}
\caption{The non-backtracking spectrum of two SBM graphs. The circle of radius $\sqrt{\alpha-1}$, where $\alpha$ is the mean degree, is drawn in pink. We have cropped the pictures, the outliers do not appear. On the left panel we took $p=q=(\log n)^2/n$, a classical \ER graph. On the right panel, we took $p=3(\log n)^2/n$ and $q=(\log n)^2/n$, which corresponds to a two-block SBM. We clearly see the two bulk insiders around $1$ and $2$.  In both cases the number of vertices is $n=1000$. }\label{figure}
\end{figure}

It was recently observed in \cite[Section 3.2]{dall2019optimized} that in fact,  there is a real eigenvalue isolated \textit{inside} the bulk that corresponds to the ratio of the two largest eigenvalues \emph{outside} the bulk,  {as displayed in the right panel} of Figure \ref{figure}.  Recall that $B$ is non-Hermitian with a complex spectrum, so that `inside'  the bulk is understood as eigenvalues inside the circle of the spectrum.  To the best of our knowledge, this phenomenon has not been rigorously studied yet. In this paper,  we prove the existence of this real eigenvalue inside the bulk for the stochastic block model (SBM)  {in the regime where the mean degree goes to infinity faster than $\log n$}.

\subsection*{Notations} Throughout the paper, we will adopt the conventional notations $a_n=o(b_n)$ when $\lim_{n\to\infty} \frac{a_n}{b_n} = 0$, $a_n=\omega(b_n)$ when $\lim_{n\to\infty} \frac{a_n}{b_n} = \infty$ and $a_n=O(b_n)$ when $|a_n/b_n|$ is bounded. All the results depend on the parameter $n$, the size of the graph, which is seen as large through $n \to \infty$.  {For any matrix $M$, we denote the spectral norm of $M$ as $\|M\|$.}

\subsection{Setting: the SBM in the logarithmic regime}

Consider a stochastic block model $G(n,p,q)$ with an even number $n$ of vertices, two blocks of equal size $n/2$, and   {two probability parameters $p,q$}: if $i,j$ are vertices in the same block then they are connected with probability $p$, and if they are in different blocks they are connected with probability $q$. We will place ourselves under the regime 
\begin{align}\label{condition:sparsity}\tag{A}
	\ p,q=\frac{\omega(\log n)}{n}, \qquad p,q\to 0,\qquad C_1\leqslant \frac{|p-q|}{p+q} \leqslant C_2
\end{align}
for some constants $C_1,C_2\in (0,1)$.  {The last condition is technical: we will see that it is  here to ensure that two separated outliers appear in the spectrum of the adjacency matrix, and are of the same order. This assumption is crucial in our perturbation analysis, see Remark \ref{rmk:sparsity} for further discussion.}

 {It is known (see for example \cite[Chapter 3]{bollobas2001random}) that when $q=p=\omega(\log n/n)$, the $G(n,p,p)$ graph (the Erd\H{o}s-R\'enyi model) is `almost regular' in the sense that all degrees are concentrated around $(n-1)p$.} In general, it can  {be shown} that when $p,q=\omega(\log n)/n$ then $G(n,p,q)$ is almost regular with degrees  {concentrated around} $$\frac{np}{2}-p+\frac{nq}{2}=\frac{n(p+q)}{2}-p. $$
See Subsection \ref{sec:degrees} for a proof. For this reason, we will denote by $\alpha$ the mean degree and by $\beta$ the mean difference degree:  
\begin{align*}
\alpha& := \mathbb E [d_i]=\frac{n(p+q)}{2}-p,\\
\beta& := \mathbb E[d^{\mathrm{in}}_i]-\mathbb E [d^{\mathrm{out}}_i]=(\frac{n}{2}-1)p-\frac{n}{2}q=\frac{n}{2}(p-q)-p
\end{align*}
where $d_i$ is the number of neighbors of vertex $i$, and $d^{\mathrm{out}}_i$ (resp. $d^{\mathrm{in}}_i$) is the number of neighbors of vertex $i$ which do not have the same type as $i$ (resp. which have the same type as $i$).  {Under assumption \eqref{condition:sparsity}, $\beta$ can be either positive or negative, and $\alpha,\beta$ are of the same order.}

Our assumptions \eqref{condition:sparsity} imply that the mean degree $\alpha$ is  {$\omega(\log n)$} and the mean difference degree $\beta$ has the same order as $\alpha$. Finally, the adjacency matrix of the graph is the $n \times n$ matrix $A$ defined by $A_{i,j} = \mathbf{1}_{i \sim j}$, where $i\sim j$ denotes the event that $i$ and $j$ are connected. 

\subsection{Main results}
Let $G=(V,E)$ be any finite graph with adjacency matrix $A$. The  Ihara-Bass   formula  gives a connection between the spectrum of $B$ defined in \eqref{def:B} and a \emph{quadratic eigenvalue problem}: for any complex $z$, 
\begin{equation}\label{NB_spec}
 \det(B-z I)=(z^2-1)^{|E|-|V|} \det (z^2I-z A+D-I), 
\end{equation}
see \cite{bass1992ihara,kotani2000zeta,stark1996zeta}. The zeros of the polynomial $z \mapsto \det (z^2I-z A+D-I)$ are usually called, with an abuse of language, \textit{the non-backtracking spectrum} of $B$, the two additional eigenvalues $\pm 1$ appearing with multiplicity $|E|-|V|$ in \eqref{NB_spec} being usually called \textit{trivial}. The non-backtracking spectrum can be expressed as the eigenvalues of a smaller matrix $H$: 
\begin{align}	\label{def:H}
H=\begin{bmatrix}
	A & I-D\\
	I & 0
\end{bmatrix}
\end{align}
where $D = \mathrm{diag}(A\mathbf{1})$ is the diagonal  degree matrix. This representation of the spectrum of $B$ in terms of $H$ is extremely useful: to compute the spectrum of $B$, we do not have to construct the matrix $B$, and we can analyze the spectrum of $B$ directly from $H$. 

Using these facts, we answer the question posed in \cite{dall2019optimized} regarding the existence of an isolated eigenvalue inside the bulk, at least under the assumptions \eqref{condition:sparsity}. We also give a detailed description of the non-backtracking spectrum that is similar to the one given in \cite{wang2017limiting} for \ER graphs.

\begin{theorem}\label{thm:main} Let $B$ be the non-backtracking operator of a stochastic block model $G(n,p,q)$ satisfying assumption \eqref{condition:sparsity}. We order the $2|E|$ eigenvalues of $B$ by decreasing modulus: $|\lambda_1(B) |\geqslant \dotsb \geqslant |\lambda_{2|E|}(B)|$. 

With probability $1-o(1)$, the spectrum of $B$ can be described as follows. First, the smallest eigenvalues  {in modulus} are the trivial eigenvalues $-1$ and $1$, each with multiplicity $|E|-|V|$. 

Then, in the non-trivial eigenvalues $\lambda_1(B), \dotsc, \lambda_{2n}(B)$, there are four real eigenvalues which are isolated, two `outliers'
\begin{equation}\label{eq:outsiders}
\lambda_1(B) = \alpha+O(\alpha^{3/4}),  \quad \lambda_2(B)=\beta+O(\alpha^{3/4}),
\end{equation}
and two `insiders'
\begin{equation}\label{eq:insiders}
\lambda_{2n-1}(B) = \frac{\alpha}{\beta}+o(1),   \quad\text{ and } \quad \lambda_{2n}(B)=1.
\end{equation}
All the other eigenvalues $\lambda_k(B)$ with $k \in \{3, \dotsc, 2n-3\}$ are located within distance $o(\sqrt{\alpha})$ of a circle of radius $\sqrt{\alpha-1}$. Moreover,  the real parts of eigenvalues of $\frac{B}{\sqrt{\alpha}}$ are asymptotically distributed as the semi-circle distribution supported on $[-1,1]$.
\end{theorem}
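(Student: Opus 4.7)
The argument proceeds through the Ihara-Bass formula \eqref{NB_spec}, which reduces the problem to the quadratic eigenvalue problem (QEP) $\det(z^{2}I-zA+D-I)=0$; the trivial eigenvalues $\pm 1$ with multiplicity $|E|-|V|$ come directly from the factor $(z^{2}-1)^{|E|-|V|}$. In the regime \eqref{condition:sparsity} a standard Bernstein bound yields $\|D-\alpha I\|_{\mathrm{op}}=o(\sqrt\alpha)$ with high probability, so I would work with $z^{2}I-zA+(\alpha-1)I$ up to a manageable perturbation. Centering $A=X+\mathbb{E}[A]$, where $X$ is a centered Wigner-type matrix and $\mathbb{E}[A]$ has rank two with eigenvalues $\alpha+o(1)$ along $\mathbf{1}/\sqrt n$ and $\beta+o(1)$ along the normalised community indicator $\vv$, splits the QEP into an unperturbed bulk piece $z^{2}I-zX+(\alpha-1)I$ and a rank-$2$ perturbation $-z\,\mathbb{E}[A]$.

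\textbf{Bulk and semicircle.} Diagonalising $X=U\diag(\mu_{i})U^{\top}$ decouples the bulk QEP into $n$ scalar quadratics $z^{2}-\mu_{i}z+(\alpha-1)=0$. Since $\|X\|_{\mathrm{op}}=(2+o(1))\sqrt\alpha$ and the empirical spectral distribution of $X/\sqrt\alpha$ converges to the semicircle on $[-2,2]$ (the standard Wigner-type statement for inhomogeneous \ER models), all but $o(n)$ of the $\mu_{i}$ satisfy $|\mu_{i}|<2\sqrt{\alpha-1}$, producing conjugate pairs $z_{i}^{\pm}=\mu_{i}/2\pm\ic\sqrt{\alpha-1-\mu_{i}^{2}/4}$ lying exactly on the circle $|z|=\sqrt{\alpha-1}$ with $\RE z_{i}^{\pm}=\mu_{i}/2$. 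This identifies the limit of the real parts of the eigenvalues of $B/\sqrt\alpha$ with the semicircle on $[-1,1]$. To pass from the unperturbed bulk QEP to the true QEP I would apply the quadratic Bauer-Fike variant announced in the abstract with the perturbation $z\,\mathbb{E}[A]+(\alpha I-D)$, whose size on an annulus around $|z|=\sqrt{\alpha-1}$ is $o(\sqrt\alpha)$.

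\textbf{Outliers and insiders.} The four isolated real eigenvalues come from the rank-two structure of $\mathbb{E}[A]$ via the matrix determinant lemma:
\[
\det(z^{2}I-zA+(\alpha-1)I)=\det(z^{2}I-zX+(\alpha-1)I)\cdot\det\bigl(I_{2}-z\,R(z)\bigr),
\]
where $R(z)$ is the $2\times 2$ compression of $(z^{2}I-zX+(\alpha-1)I)^{-1}\mathbb{E}[A]$ to $\mathrm{Range}(\mathbb{E}[A])$. Away from the bulk, concentration of the resolvent along the two rank-one directions yields $R(z)\approx (z^{2}-(\alpha-1))^{-1}\mathbb{E}[A]$, and the isolated eigenvalues are the real roots of the two scalar equations $z^{2}-\alpha z+(\alpha-1)=(z-1)(z-(\alpha-1))=0$ and $z^{2}-\beta z+(\alpha-1)=0$. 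The first produces the insider $z=1$ and the outlier $z=\alpha-1$; since $\beta\gtrsim C_{1}\alpha$ by \eqref{condition:sparsity}, the second has positive discriminant and yields the outlier $z\approx\beta$ and the insider $z\approx(\alpha-1)/\beta\approx\alpha/\beta$. Matching the orders and tracking the concentration error of $R(z)$ produces the quantitative bounds in \eqref{eq:outsiders}--\eqref{eq:insiders}.

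\textbf{Main obstacle.} The classical Bauer-Fike inequality controls perturbations through the condition number of the unperturbed eigenvector basis, which for the non-Hermitian linearisation of a QEP whose eigenvalues accumulate on a circle can be arbitrarily bad. The technical heart of the argument — and the reason for the new quadratic Bauer-Fike — is to bypass this ill-conditioning and bound eigenvalue displacements uniformly over an annulus around the circle of radius $\sqrt{\alpha-1}$ directly in terms of the operator norm of the perturbation and the resolvent of the unperturbed QEP, rather than an eigenvector condition number.
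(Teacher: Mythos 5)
Your overall architecture (Ihara--Bass, derandomize $D\mapsto\alpha I$, use a quadratic Bauer--Fike) matches the paper's, but you diverge on the key step and there is a genuine gap at the insiders. The paper does \emph{not} split $A=X+\mathbb E[A]$ and does not go through a rank-two/determinant-lemma analysis. Instead it keeps $A$ whole, observes that the derandomized linearization $H_0=\begin{bmatrix}A&-(\alpha-1)I\\ I&0\end{bmatrix}$ is QEP-diagonalizable by the \emph{unitary} eigenbasis of $A$ (so the condition number in Theorem~\ref{thm:BFQEP} is exactly $1$), and reads the outliers and insiders of $H_0$ directly off the two largest eigenvalues $\lambda_1(A)\approx\alpha$, $\lambda_2(A)\approx\beta$ via the explicit roots of $z^2-\lambda_k z+(\alpha-1)$. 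Your BBP-style route for the outliers is a legitimate alternative, but it asks for resolvent concentration estimates at $z\approx\alpha,\beta$ that the paper never needs.

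The real gap is the insiders. Applying the quadratic Bauer--Fike to $H$ versus $H_0$ gives displacement $\sqrt{\|D-\alpha I\|}=o(\sqrt\alpha)$, which is of the same order as the distance between the insiders (at $O(1)$) and the bulk circle (radius $\sqrt{\alpha-1}$); so that comparison cannot isolate the eigenvalues near $1$ and $\alpha/\beta$ --- this is exactly the point of Remark~\ref{remark:comparision}. Your plan says ``work with $z^2I-zA+(\alpha-1)I$ up to a manageable perturbation,'' but at the insider scale this perturbation is not manageable by that Bauer--Fike bound, and your determinant-lemma analysis is set up only for the derandomized QEP. The paper's workaround is to pass to the companion linearization of $H^{-1}$ (using the Kotani--Sunada/Angel--Friedman--Hoory lower bound $|\lambda(B)|\geqslant 1$ so inversion is legal), where the perturbation $\|(D-I)^{-1}-(\alpha-1)^{-1}I\|=o(\alpha^{-1})$ yields an overall $\varepsilon=o(1)$, which \emph{does} isolate the inverse-spectrum outliers $\approx 1$ and $\approx\beta/\alpha$. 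You would either need to reproduce this inversion trick, or carry out your resolvent/determinant-lemma argument for the \emph{true} QEP $z^2I-zA+(D-I)$ with uniform control of the $2\times 2$ compression near $z=1$ and $z=\alpha/\beta$ (including the effect of $D-\alpha I$), which you have not done. Finally, note that the exact identity $\lambda_{2n}(B)=1$ in the paper comes from the deterministic fact that $1$ is always a root of $\det(z^2 I - zA + D - I)$ (since $0$ is a Laplacian eigenvalue); your factorization $(z-1)(z-(\alpha-1))$ only holds for the derandomized problem and would not by itself give the exact value.
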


 {To present our approach in the most efficient and clear way}, we state and prove the theorem in the simplest regime, when there are only two blocks and when the community structure appears in the spectrum through the presence of an extra outlier near $\beta$. It is straightforward to check that our proof can be extended to a diversity of settings, when the mean degree is $\omega(\log n)$. We state this as an informal result: 

\emph{ Assume that with high probability the degree of each vertex is $(1+o(1))\alpha$  and the spectrum of the adjacency matrix has $k=O(1)$ outliers far outside $[-2\sqrt{\alpha}, 2\sqrt{\alpha}]$, say $\lambda_1 \approx \alpha$, and $\lambda_2, \dotsc, \lambda_k$.  {Assume $\lambda_1,\dots,\lambda_k$ are  are of the same order}. Then with high probability its non-backtracking spectrum will have $k$ eigenvalues near $\lambda_i$ for $i \in [k]$, then $k$ eigenvalues near $\alpha/\lambda_i$, and all the other eigenvalues will be located within distance $o(\sqrt{\alpha})$ of the circle of radius $\sqrt{\alpha-1}$}. 

\begin{remark}
     {The concentration results we have in Section \ref{sec:SBM} work for general inhomogeneous random graphs with outliers of the same order. Our modified Bauer-Fike theorem given in Theorem \ref{thm:BFQEP} works for general  inhomogeneous random graphs as well, as long as each vertex has almost regular degree $(1+o(1))\alpha$. Therefore all the analysis in Section \ref{sec:proof} can be extended to the case we mentioned above for $k$ outliers.} 
\end{remark}

\bigskip

The presence of bulk insiders in Theorem \ref{thm:main}  and in the preceding statement are illustrated in Figure \ref{figure} for a realization of an \ER graph and a realization of an SBM graph. 
Note that the description of the spectrum of $B$ in the preceding theorem is much more precise than Theorem 1.5 in \cite{wang2017limiting}.  {This comes from the fact that their perturbation parameter $R = c \sqrt{\log n/ p}$ goes to infinity (see Theorem 1.5 in \cite{wang2017limiting} for the exact statement, where the scaling parameter is different from ours)}. Our method includes a tailored version of the Bauer-Fike theorem suited for perturbations of matrices like \eqref{def:H}, which yields perturbation bounds that are  better than  the classical Bauer-Fike theorem in terms of the order of magnitude, and without which the existence of the two eigenvalues at $1$ and near $\beta/\alpha$ would not follow. We think  such variants of the Bauer-Fike theorem could be of independent interest.

\subsection{Bulk insiders and community detection}

The real eigenvalue of $B$ inside the bulk is closely related to  community detection problems for SBMs. An interesting
heuristic spectral algorithm  based on the Bethe Hessian matrix was proposed in \cite{watanabe2009graph,saade2014spectral}. The Bethe Hessian matrix, sometimes called  deformed Laplacian (\cite{grindrod2018deformed,dall2019optimized}), is defined as 
\[H(r) := (r^2-1)I+D-rA,\]	
 where $r\in \mathbb R$ is a regularizer to be carefully tuned. It is conjectured in \cite{saade2014spectral} that a spectral algorithm based on the eigenvectors associated with the negative eigenvalues of $H(r)$ with $r=\sqrt{\rho(B)}$ is able to reach the information-theoretic threshold confirmed in \cite{massoulie2014community,bordenave2018nonbacktracking,mossel2018proof,mossel2015reconstruction} for community detection in the dilute regime. In a subsequent work \cite{dall2019optimized}, the authors crafted a spectral algorithm based on $H(r)$ with $r=\alpha/\beta$  and empirically showed it outperforms already known spectral algorithms. Their choice of $r$ was motivated by the conjectured value of the real eigenvalue inside the bulk. The gain in using $H(r)$ instead of $B$ in spectral algorithms mainly comes from the fact that $H(r)$ has a smaller dimension than $B$, is Hermitian, and is easiest to build from $A$ --- nearly no preprocessing is needed, in contrast with non-backtracking matrices \cite{bordenave2018nonbacktracking,krzakala2013spectral}, self-avoiding path matrices \cite{massoulie2014community} or graph powering matrices \cite{abbe2018graph}. 

The relation between the Bethe Hessian matrix and the non-backtracking operator is given by the Ihara-Bass formula (see \eqref{NB_spec} above). Therefore, a good understanding of the real eigenvalues  of the non-backtracking operator is the first step towards  understanding the theoretical guarantee of the heuristic algorithms purposed in \cite{saade2014spectral,dall2019optimized}.

Unfortunately, our proof techniques do not work in the dilute regime. In the regime studied in this paper, community detection problems are now very well understood and clustering based on the second eigenvector of $A$ has been proven to yield \emph{exact reconstruction} (see \cite{abbe2017entrywise}). Our result should instead be seen as a preliminary step in view of 
1) proving the existence of bulk insiders in the dilute regime, 2) showing their usefulness in practical reconstruction.  {It will be helpful in practice to have a better understanding of the eigenvectors for the Bethe Hessian matrix, and we leave it as a future direction.}

The key obstacle is the lack of concentration of degrees profiles, which tells us  random graphs with bounded expected degrees are far away from being `roughly' regular (see also the discussion in Subsection \ref{sec:degrees}). Without this property, our perturbation analysis does not apply.

\subsection*{Organization of the paper}In Section \ref{sec:prelim}, we first state some classical facts on the non-backtracking spectrum of graphs then we state and prove a perturbation theorem which is well suited for quadratic eigenvalue problems and  improves the classical Bauer-Fike results. In Section \ref{sec:SBM}, we gather several facts on stochastic block models. Then we study the spectrum of $H$ as in \eqref{def:H} and a suitably chosen perturbation of $H$ (defined later in \eqref{eq:defH_0}). In Section \ref{sec:proof} we prove the main theorem.

\section{Perturbation of the non-backtracking spectrum}\label{sec:prelim}

\subsection{The non-backtracking spectrum}
When the graph is regular with degree $d$,  the diagonal matrix satisfies  $D=dI$, and we can relate the eigenvalues of $B$ with the eigenvalues of $A$ through exact algebraic relations as in the following elementary lemma.

\begin{lemma}\label{lem:H0}Let $\hat A$ be a Hermitian matrix with eigenvalues $\lambda_1, \dotsc, \lambda_n$ and let $\eta$ be a nonzero complex  number. Then, the characteristic polynomial of the matrix 
\[
\hat H_0=\begin{bmatrix}
	\hat A & \eta  I\\
	I & 0
\end{bmatrix}
\]
is given by $\chi_{\hat H_0}(z)=\prod_{k=1}^n (z^2-\lambda_k z - \eta)$, and the eigenvalues of $\hat H_0$ are the $2n$ complex numbers (counted with multiplicities) which are solutions of $z^2 - z \lambda_k -\eta=0$ for  $k \in [n]$.
\end{lemma}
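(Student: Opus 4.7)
The plan is a direct block-determinant computation followed by a diagonalization of $A$.

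First I would expand
\[ \chi_{H_0}(z) = \det(zI - H_0) = \det\begin{bmatrix} zI - A & -\eta I \\ -I & zI \end{bmatrix}. \]
The bottom-row blocks $-I$ and $zI$ commute (both are scalar multiples of the identity), so Silvester's block-determinant identity for commuting blocks applies and gives
\[ \chi_{H_0}(z) = \det\bigl((zI - A)(zI) - (-\eta I)(-I)\bigr) = \det(z^2 I - zA - \eta I). \]
Since $A$ is Hermitian, I would then diagonalize $A = U \Lambda U^*$ with $\Lambda = \diag(\lambda_1, \dotsc, \lambda_n)$; because $z^2 I - zA - \eta I = U(z^2 I - z\Lambda - \eta I)U^*$, the determinant is invariant under this similarity and factors as
\[ \chi_{H_0}(z) = \prod_{k=1}^n (z^2 - \lambda_k z - \eta). \]
Solving each quadratic yields the $2n$ eigenvalues of $H_0$ listed with multiplicity.

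There is no serious obstacle here: this is essentially an algebraic identity. The only hypothesis to verify when invoking the block-determinant formula is the commutativity of one pair of blocks, which is immediate because $-I$ is scalar. As a sanity check, the same conclusion can be reached along a slightly different route: conjugating $H_0$ by $\diag(U, U)$ reduces to the case $A = \Lambda$, and then a symmetric permutation of the $2n$ indices (interleaving the two copies of $[n]$) turns $H_0$ into a block-diagonal matrix whose $n$ diagonal blocks are the $2\times 2$ companion-type matrices $\left(\begin{smallmatrix} \lambda_k & \eta \\ 1 & 0 \end{smallmatrix}\right)$, each of which has characteristic polynomial $z^2 - \lambda_k z - \eta$; taking the product recovers $\chi_{H_0}$. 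Either approach is short, and no probabilistic or analytic input is needed.
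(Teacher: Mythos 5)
Your proof is correct and amounts to the same elementary determinant computation the paper implicitly uses: the paper only remarks that the factorization follows ``by elementary operations on the lines of $H_0$,'' and Silvester's commuting-block identity is precisely a packaged version of that row reduction (the bottom blocks $-I$ and $zI$ are scalar, so the Schur-complement step is legitimate and collapses to $\det(z^2I - zA - \eta I)$). The diagonalization of the Hermitian $A$ and the final factorization into quadratics are then immediate, matching the paper's statement. Your alternate route --- conjugating by $\diag(U,U)$ and then applying the interleaving permutation to reach a block-diagonal matrix of $2\times 2$ companion blocks $\left(\begin{smallmatrix}\lambda_k & \eta \\ 1 & 0\end{smallmatrix}\right)$ --- is equally valid and arguably makes the pairing of eigenvalues of $H_0$ with eigenvalues of $A$ most transparent, but it is not a substantively different argument; both are short exact algebraic identities with no analytic or probabilistic content.
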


Similar exact relations have also been used when the graph has a very specific structure, like bipartite biregular (see \cite{brito2018spectral}). When the graph $G$ does not exhibit such a simple structure, the relation between $A$ and $B$ becomes more involved. Several Ihara-Bass-like formulas are available (see for instance \cite{watanabe2009graph,benaych2017spectral,anantharaman2017some}), but they are usually hard to analyze. 

As cleverly noted in \cite{wang2017limiting}, the spectrum of $H$ as in \eqref{def:H} is hard to describe in terms of the spectrum of $A$, but the spectrum of $ \hat H_0$ in the preceding lemma is completely explicit in terms of the spectrum of $\hat{A}$, even if $\hat A$ has no specific structure. It is therefore quite natural to study the spectrum of $\hat H_0$ using the spectrum of $\hat A$, then use perturbation theorems to infer results on the `true' non-backtracking spectrum, the spectrum of $H$. This is done in \cite{wang2017limiting} through a combination of the Bauer-Fike theorem and a refinement of the Tao-Vu replacement principle \cite{tao2010random}.

The celebrated Bauer-Fike theorem says that\textit{ if a square matrix $\hat A$ is diagonalizable, say $\hat A=P\Delta P^{-1}$ for a diagonal matrix $\Delta$ and a non-singular matrix $P$, then under a perturbation $E$, every eigenvalue of the matrix $\hat A+E$  is within distance $\varepsilon$ of an eigenvalue of $\hat A$}, where $\varepsilon = \kappa(P)\Vert E \Vert$, and $\kappa(P)=\Vert P \Vert \Vert P^{-1} \Vert$ is the condition number (see for instance \cite{bordenave2018nonbacktracking}). 

 {We observe that} the Bauer-Fike theorem, while optimal in the worst case, is indeed extremely wasteful when applied to $H$ and $H_0$. Taking into account the specific structure of $H$ and $H_0$ yields a better perturbation bound at virtually no cost, as shown in the next section. 
\subsection{Bauer-Fike theorems for quadratic eigenvalue problems}

A \emph{quadratic eigenvalue problem (QEP)} consists of finding the zeroes of the polynomial equation 
\begin{equation}\label{QEP}0= \det (z^2 M - z \hat{A} - X)\end{equation}
 where $M,\hat 
 A,X$ are square matrices,  and $M$ is non-singular. Such problems appear in a variety of contexts and  there exists an extensive literature on them, mainly from a numerical point of view (see the survey \cite{tisseur2001quadratic}). 

The triplet $(M, \hat A,X)$ can be replaced with by the triplet $(I, \hat A M^{-1}, XM^{-1})$ without changing the problem, so we will be interested in the case where $M=I$. In this case, one can easily check that the solutions of \eqref{QEP} are the eigenvalues of the $2n \times 2n$ matrix
\[Q_{ \hat A,X}:= \begin{bmatrix}
\hat A & X \\I & 0
\end{bmatrix}, \]
which is called a \emph{linearization} of the problem. In this section, we will present extensions of the Bauer-Fike theorem for linearizations of quadratic eigenvalue problems. 

If both matrices $\hat A$ and $X$ are diagonal, say $\hat A = \mathrm{diag}(\hat a_i)$ and $X=\mathrm{diag}(x_i)$, then it is easily seen through elementary linear algebra operations that 
\begin{equation}\label{eq:QAXdet}
\det(Q_{\hat A,X}  -z I) = \prod_{i=1}^n (z^2 - \hat a_i z - x_i)
\end{equation}
and the eigenvalues of $Q_{\hat A,X}$ are the $2n$ complex solutions of the collection of $n$ quadratic equations $z^2-\hat a_iz - x_i=0$ for $1\leq i\leq n$. 

We say the matrices $\hat A$ and $X$ are \textit{co-diagonalizable} if there is a common non-singular matrix $P$ such that $P\hat{A}P^{-1}$ and $PXP^{-1}$ are diagonal. If $\hat{A}, X$ are co-diagonalizable, then the identity \eqref{eq:QAXdet} still holds with $\hat a_i$ being the eigenvalues of $ \hat A$ and $x_i$ being those of $X$. As a consequence, we say that that $Q_{ \hat A,X}$ is \emph{QEP-diagonalizable} if $\hat A$ and $X$ are co-diagonalizable. This is equivalent to ask that the matrix $z^2I - z\hat A-X$ is diagonalizable for any $z\in\mathbb C$. 

\bigskip

Our main tool for the perturbation analysis is the following theorem.

\begin{theorem}\label{thm:BFQEP}
Let $\hat{A},\hat{B}, X, Y$ be $n \times n$ matrices. We define
\begin{equation*}
L_0=\begin{bmatrix}
\hat{A} & X \\ I & 0
\end{bmatrix}, \qquad L = \begin{bmatrix}
\hat B & Y \\ I & 0
\end{bmatrix}.
\end{equation*}
Suppose $L_0$ is QEP-diagonalizable, with $\hat A$ and $X$ being diagonalized by the common matrix $P$. Then, for any eigenvalue $\mu$ of $L$, there is an eigenvalue $\nu$ of $L_0$ such that 
\begin{equation}\label{eq:QEPBF:1}
|\mu - \nu | \leqslant \sqrt{\kappa(P)} \sqrt{\Vert X-Y + \mu(\hat{A}-\hat B) \Vert}.
\end{equation}
Moreover, `multiplicities are preserved' in the following sense:
Denote $\varepsilon(\mu)$ the RHS of \eqref{eq:QEPBF:1} and $\varepsilon=\max_{\mu \in \spec(L)}\varepsilon(\mu)$. If $\nu_1, \dotsc, \nu_n$ are the eigenvalues of $L_0$ and  $\mathcal{K}$ is a subset of $[n]$ such that 
\begin{equation*}
\left( \cup_{j \in \mathcal{K}} \mathcal{B}(\nu_k, \varepsilon) \right) \cap \left( \cup_{j \notin \mathcal{K}}\mathcal B(\nu_k, \varepsilon) \right)= \varnothing,
\end{equation*}
where $\mathcal B(\nu_k,\varepsilon)=\{z\in \mathbb C: |z-\nu_k|\leq \varepsilon \}$ for $1\leq k\leq n$,
then the number of eigenvalues of $L$ in $\cup_{j \in \mathcal{K}}\mathcal{B}(\nu_k, \varepsilon) $ is exactly equal to $|\mathcal{K}|$.
\end{theorem}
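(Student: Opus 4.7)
The strategy is to work directly with the $n \times n$ QEP matrix $\mu^2 I - \mu B - Y$ rather than with the $2n \times 2n$ linearization $L$, replacing the brute-force $\|L - L_0\|$ of the classical Bauer--Fike bound by the structured quantity $\|E(\mu)\|$ defined below. Since $\mu \in \mathrm{Spec}(L)$ is equivalent to $\det(\mu^2 I - \mu B - Y) = 0$ (Schur complement applied to $L - \mu I$ and continuity at $\mu = 0$), I split
\[
\mu^2 I - \mu B - Y = M(\mu) + E(\mu), \quad M(\mu) \defeq \mu^2 I - \mu A - X, \quad E(\mu) \defeq \mu(A-B) + (X-Y).
\]
If $M(\mu)$ is singular then $\mu \in \mathrm{Spec}(L_0)$ and \eqref{eq:QEPBF:1} is trivial; otherwise, writing $M(\mu) + E(\mu) = M(\mu)(I + M(\mu)^{-1} E(\mu))$, singularity of the left-hand side forces $\|M(\mu)^{-1} E(\mu)\| \geqslant 1$, hence $\|M(\mu)^{-1}\| \geqslant 1/\|E(\mu)\|$.

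Next I exploit QEP-diagonalizability. Conjugating by $P$ turns $M(\mu)$ into the diagonal matrix $\mathrm{diag}(\mu^2 - \mu a_i - x_i)$, and by Lemma~\ref{lem:H0} together with Vieta's formulas, $\mu^2 - \mu a_i - x_i = (\mu - \nu_i^+)(\mu - \nu_i^-)$ where the $\nu_i^\pm$ enumerate $\mathrm{Spec}(L_0)$. This produces the key inequality
\[
\frac{1}{\|E(\mu)\|} \leqslant \|M(\mu)^{-1}\| \leqslant \frac{\kappa(P)}{\min_i |\mu - \nu_i^+|\,|\mu - \nu_i^-|}.
\]
Choosing the smaller of the two factors of the minimizing pair yields an eigenvalue $\nu \in \mathrm{Spec}(L_0)$ with $|\mu - \nu|^2 \leqslant \kappa(P)\,\|E(\mu)\|$, which is precisely \eqref{eq:QEPBF:1}. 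The square root is the improvement over classical Bauer--Fike and comes directly from the quadratic (rather than linear) pencil.

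For multiplicity preservation I argue by continuity along the homotopy $L_t \defeq (1-t)L_0 + tL$, $t \in [0, 1]$, whose pencil perturbations become $t(A-B)$ and $t(X-Y)$. Applying the bound just derived to each $L_t$ yields $|\mu_t - \nu| \leqslant \sqrt{t\,\kappa(P)\,\|E(\mu_t)\|}$ for any eigenvalue $\mu_t$ of $L_t$ and some $\nu \in \mathrm{Spec}(L_0)$. I then use the contour-integral counting formula $N(t) = \tfrac{1}{2\pi i}\oint_{\partial U} \mathrm{tr}((zI - L_t)^{-1})\,dz$ with $U = \bigcup_{k \in \mathcal{K}} B(\nu_k, \varepsilon)$: this integer-valued quantity is continuous in $t$ provided no eigenvalue of $L_t$ touches $\partial U$, which the separation hypothesis together with the bound on $|\mu_t - \nu|$ guarantees. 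Since $N(0) = |\mathcal{K}|$, we conclude $N(1) = |\mathcal{K}|$.

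The main technical obstacle is obtaining the uniform estimate $\sqrt{t\,\kappa(P)\,\|E(\mu_t)\|} \leqslant \varepsilon$ along the entire path, not merely at the endpoint $t=1$ where $\varepsilon$ is defined. Since $E(\mu)$ grows at most linearly in $|\mu|$, this reduces to a uniform bound on the spectral radius of $L_t$, which follows from the continuity of the roots of $\det(zI - L_t)$ and a compactness argument on $[0,1]$; any slack can be absorbed by slightly inflating $\varepsilon$, permitted by the strict separation assumption, before taking the final limit.
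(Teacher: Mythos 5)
Your derivation of the bound \eqref{eq:QEPBF:1} is essentially the paper's proof, in slightly different dress: both split $\mu^2 I - \mu B - Y = S_\mu + E(\mu)$ with $S_\mu = \mu^2 I - \mu A - X$, use singularity of the sum to force $\Vert S_\mu^{-1} E(\mu)\Vert \geqslant 1$, diagonalize $S_\mu$ by $P$ to bound $\Vert S_\mu^{-1}\Vert$ by $\kappa(P)$ over the smallest $|\mu^2 - \mu\lambda_k - \delta_k|$, factor that scalar polynomial via Vieta into $(\mu-\nu_k^+)(\mu-\nu_k^-)$, and take the smaller factor. The only cosmetic difference is that you pass through the Schur complement to identify $\spec(L)$ with $\{\mu : \det(\mu^2 I - \mu B - Y) = 0\}$, while the paper directly observes $R_\mu$ is singular; the estimates coincide.

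For the multiplicity statement the paper simply cites an external appendix, and your homotopy/argument-principle route is the standard replacement. You are right to flag the uniformity problem, but the fix you propose does not close it, and the issue is real. A uniform spectral-radius bound $R$ on the path $L_t$ only gives $\Vert E(\mu_t)\Vert \leqslant R\Vert A-B\Vert + \Vert X - Y\Vert$, which has no reason to be dominated by $\max_{\mu\in\spec(L)}\Vert E(\mu)\Vert$; and you cannot ``absorb the slack by inflating $\varepsilon$'' because the inflated $\varepsilon$ would have to appear in the disjointness \emph{hypothesis}, making the result weaker, not merely the proof longer. Indeed, with $\varepsilon$ defined as in the statement the multiplicity claim can fail: take $n=1$, $A=0$, $X=1$, $B=2$, $Y=-0.99$, so $\spec(L_0)=\{1,-1\}$, $\spec(L)=\{0.9,1.1\}$, $\kappa(P)=1$, $\varepsilon(0.9)=\sqrt{0.19}$, $\varepsilon(1.1)=\sqrt{0.21}$, hence $\varepsilon\approx 0.458$; the balls $B(1,\varepsilon)$ and $B(-1,\varepsilon)$ are disjoint, yet both eigenvalues of $L$ land in $B(1,\varepsilon)$. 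The correct formulation must take $\varepsilon$ large enough to dominate $\varepsilon_t(\mu_t)$ uniformly along the homotopy --- e.g.\ $\varepsilon \geqslant \sqrt{\kappa(P)\bigl(\rho\Vert A-B\Vert + \Vert X-Y\Vert\bigr)}$ where $\rho$ is an a priori bound on $\sup_{t\in[0,1]}\rho(L_t)$ --- which is what the paper's concrete applications ($K$ vs.\ $K_0$ and $H$ vs.\ $H_0$) implicitly supply. Your instinct to demand a uniform-in-$t$ estimate was the right one; the missing point is that it must be built into the definition of $\varepsilon$, not recovered by a compactness afterthought.
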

\begin{remark} Theorem \ref{thm:BFQEP} is stated for two general matrices $L_0$ and $L$. However, the inequality \eqref{eq:QEPBF:1} will yield good perturbation bound only  when  we can control the difference between $\hat{A}, \hat B$, the difference between $X,Y$, and the condition number $\kappa (P)$.
\end{remark}

\begin{proof}
Assume $\mu$ is an eigenvalue of $L$. The matrix $R_\mu := \mu^2I - \mu \hat B -Y$ is then singular. Assume, in addition, that $\mu$ is not an eigenvalue of $L_0$. Then, the matrix $S_\mu := \mu^2I - \mu \hat{A} -X$ is non-singular. We have 
\begin{align*}
R_\mu = \mu^2I - \mu \hat B -Y &= \mu^2 I - \mu \hat{A} - X + X + \mu \hat{A}  - \mu \hat B -Y \\
&= S_\mu  + X + \mu \hat{A}  - \mu \hat B -Y \\
&= S_\mu (I +S_\mu^{-1} (X + \mu \hat{A}  - \mu \hat B -Y) ).
\end{align*}
As a consequence the matrix $I +S_\mu^{-1} (X + \mu \hat{A}  - \mu \hat B -Y) $ is singular, which directly implies that $-1$ is an eigenvalue of $S_\mu^{-1} (X + \mu \hat{A}  - \mu\hat B -Y)$. therefore by the definition of spectral norm,
$$1\leqslant \Vert S_\mu^{-1} \Vert \cdot  \Vert X + \mu \hat{A}  - \mu \hat B -Y \Vert = \Vert S_\mu^{-1} \Vert\cdot \Vert X-Y + \mu (\hat{A}-\hat B) \Vert.$$
As noted before the statement of the theorem, if $L_0$ is QEP-diagonalizable then the matrix $S_\mu$ is indeed diagonalizable: if $\Sigma=\diag(\lambda_i)$ is the diagonal matrix of eigenvalues of $\hat{A}$, $\Delta=\diag(\delta_i)$ the diagonal matrix of eigenvalues of $X$, and $P$ their common diagonalization matrix, then $S_\mu = P^{-1}(\mu^2 I - \mu \Sigma - \Delta)P$, and the eigenvalues of $S_\mu$ are the complex numbers $\mu^2 - \mu \lambda_k - \delta_k$, so  
\[\Vert S_\mu^{-1} \Vert \leqslant \kappa(P) \times \max_{k \in [n]} |\mu^2 - \mu \lambda_k - \delta_k |^{-1}. \]
From this, we infer that there is a $k \in [n]$ such that $\Vert S_\mu^{-1}\Vert \leqslant \kappa(P) \times |\mu^2 - \mu \lambda_k - \delta_k|^{-1}$.  Let us denote $\alpha_k$ and $\beta_k$ the two complex solutions of $0=z^2 - z\lambda_k - \delta_k$ (they are eigenvalues of $L_0$), then
\[
	1\leqslant \kappa(P)\times  \frac{  \Vert X-Y + \mu (\hat{A}-\hat B) \Vert}{|\mu^2 - \mu \lambda_k - \delta_k|} =\kappa(P)\times \frac{  \Vert X-Y + \mu (\hat{A}-\hat B) \Vert}{|\mu-\alpha_k||\mu-\beta_k|}
\]
which implies that
\[|\mu-\alpha_k||\mu-\beta_k|\leqslant \kappa(P)  \Vert X-Y + \mu (\hat{A}-\hat B) \Vert := x. \]
If $|\mu-\alpha_k|$ and $|\mu-\beta_k|$ were both strictly greater than $\sqrt{x}$, the preceding inequality would be violated. One of those distances is thus smaller than $\sqrt{x}$, thus proving \eqref{eq:QEPBF:1}.

The `multiplicities preserved' part is then proven as usual with the complex argument principle, see for instance \cite[Appendix A]{coste2017spectral}. 
\end{proof}

When applying the preceding result with $\hat A=\hat B$, one gets the following corollary. 

\begin{corollary}\label{cor:corol1}
Let 
\begin{equation}
L_0=\begin{bmatrix}
\hat{A} & X \\ I & 0
\end{bmatrix} \qquad L = \begin{bmatrix}
\hat{A} & Y \\ I & 0
\end{bmatrix}
\end{equation}
where $\hat{A},X,Y$ are square matrices and are such that $L_0$ is QEP-diagonalizable with $\hat{A}$ and $X$ diagonalized by the common matrix $P$. Then, for any eigenvalue $\mu$ of $L$, there is an eigenvalue $\nu$ of $L_0$ such that 
\begin{equation}\label{QEPBF:2}
|\mu - \nu | \leqslant \varepsilon := \sqrt{\kappa(P)\Vert X-Y \Vert}.
\end{equation}
\end{corollary}

\begin{remark}[Comparison with classical Bauer-Fike]
Casting the classical Bauer-Fike theorem in this setting would yield an error term of $\varepsilon' = \kappa(Q)\Vert X-Y \Vert $, where $Q$ is the diagonalization matrix of $L_0$. We thus gain the whole square root, and we do not need to compute the condition number of $Q$. This improvement is remarkable when the matrix $\hat{A}$ is itself Hermitian, for in this case $P$ is unitary and $\kappa(P)=1$, thus reducing the error term to $\sqrt{\Vert X-Y \Vert}$. If we had invoked the classical Bauer-Fike theorem instead, the error term would be $\kappa(Q)\Vert X-Y \Vert$, which can be far bigger than $\Vert X-Y\Vert$. In fact, the matrix $L_0$ is not Hermitian in general, and its diagonalization matrix $Q$ might be either difficult to compute or ill-conditioned: in \cite{wang2017limiting}, the bound obtained by the authors is $\kappa(Q) \leqslant O(\sqrt{1/p})$,  { where $p$ is the connection probability for an \ER graph $G(n,p)$}. Our version of the Bauer-Fike theorem shows that for QEP, the only parameters at stake in perturbations are those of the original matrices $\hat{A}$ and $X$, not those of the linearization of the QEP. 
\end{remark}

\section{The stochastic block model in the logarithmic regime}\label{sec:SBM}

 {In this section, we collect results from the literature on stochastic block models or inhomogeneous Erd\H{o}-R\'enyi graphs, based on which we prove several quick results for our models, as given in Proposition \ref{thm:concentrateA}, Proposition \ref{thm:specH_0} and Corollary \ref{cor:corol_inv}. }

\subsection{Outliers of the adjacency matrix}

The concentration of the spectral norm for the SBMs follows immediately from the  spectral norm bounds given in \cite{latala2018dimension,benaych2017spectral} for inhomogeneous
random matrices and random graphs. Recall Assumption \eqref{condition:sparsity}.
The following statement can be found for example in Example 4.1 of \cite{latala2018dimension}: \emph{assume $\alpha=\omega(\log n)$, then 
	\begin{align*}
	\mathbb E[ \|A-\mathbb EA\|] \leqslant (2+o(1))\sqrt{\alpha}.
	\end{align*}}
Also from Equation (2.4) in \cite{benaych2017spectral}, there exists a constant $c>0$ such that
\begin{align*}
\mathbb P\left( \left|\frac{\|A-\mathbb EA\|}{\sqrt{\alpha}}- \frac{\mathbb E[\|A-\mathbb EA\|]}{\sqrt{\alpha}}\right|\geqslant t\right)	\leqslant 2e^{-c\alpha^2t^2}.
\end{align*}
Taking  $t=\sqrt{\log n}/\alpha$ in the inequality above,  we have with probability $1-2n^{-c}$ that 
\begin{align}
\|A-\mathbb EA\| &\leqslant \sqrt{\frac{\log n}{\alpha}} +\mathbb E[\|A-\mathbb EA\|] \leqslant (2+o(1))\sqrt{\alpha}.
\end{align}
Since all the eigenvalues of $\mathbb E [A]$ are $\{-p,\beta,\alpha\}$, and $p=o(1)$ under assumption \eqref{condition:sparsity}, the Weyl eigenvalue inequalities for Hermitian matrices yields the following proposition.

\begin{proposition}\label{thm:concentrateA}
Assume $\alpha=\omega(\log n)$, then with high probability the following holds:
\begin{align*}
|\lambda_1(A)-\alpha|  &\leqslant (2+o(1))\sqrt{\alpha}, \quad 
|\lambda_2(A)-\beta|\leqslant (2+o(1))\sqrt{\alpha},\\
\max_{k\geqslant 3}|\lambda_k(A)| &\leqslant  (2+o(1))\sqrt{\alpha}.
\end{align*} 
\end{proposition}

\subsection{Spectrum  of the partially derandomized matrix $H_0$}

We will use the notation $$\gamma =  \frac{n(p+q)}{2}-p-1 = \alpha-1$$ which is the `mean degree minus one'. We introduce the \textit{partial derandomization} of $H$ (defined in \eqref{def:H}) as:
\begin{align}	\label{eq:defH_0}
H_0=\left[\begin{matrix}
	A &-\gamma I\\
	I & 0
\end{matrix}\right].
\end{align}

As already mentioned in Lemma \ref{lem:H0}, by elementary operations on  $H_0$, one finds that the characteristic polynomial of $H_0$ is indeed equal to
\begin{align}
\chi_0(z)&=\prod_{i=1}^n (z^2 - \lambda_i z + \gamma ). \label{eq:detHzero}
\end{align}
The eigenvalues of $H_0$ hence come into conjugate pairs coming from eigenvalues of $A$. Those eigenvalues $\lambda_k$ of $A$ for which $|\lambda_k|<2\sqrt{\gamma}$ give rise to two complex conjugate eigenvalues 
\begin{equation}\label{eigenvaluesHzero:complex}
 \frac{\lambda_k - \ic \sqrt{ 4\gamma - \lambda_k^2}}{2} \quad \text{ and } \quad  \frac{\lambda_k + \ic \sqrt{ 4\gamma - \lambda_k^2}}{2}
\end{equation}
and the other ones, the outliers $|\lambda_k|>2\sqrt{\gamma}$ of the spectrum of $A$, give rise to  two `harmonic conjugate' eigenvalues 
\begin{equation}\label{eigenvaleus Hzero:real}
 \frac{\lambda_k + \sqrt{\lambda_k^2- 4\gamma }}{2} \qquad \text{and} \qquad  \frac{\gamma}{ \frac{\lambda_k + \sqrt{\lambda_k^2- 4\gamma }}{2} }.
\end{equation}

Next we obtain a description of the eigenvalues of $H_0$ from the discussion on the spectrum of $A$ in Proposition \ref{thm:concentrateA}. The description is illustrated in the second panel of Figure \ref{figure2}, the first one depicting the same phenomenon but for \ER graphs (with only one outlier in the spectrum).
\begin{figure}\centering
\includegraphics[width=\textwidth]{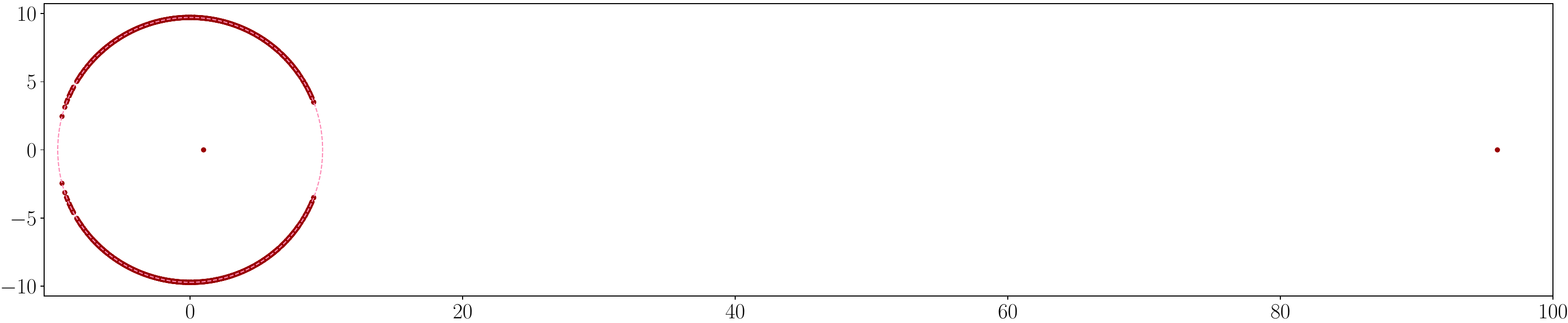}
\includegraphics[width=\textwidth]{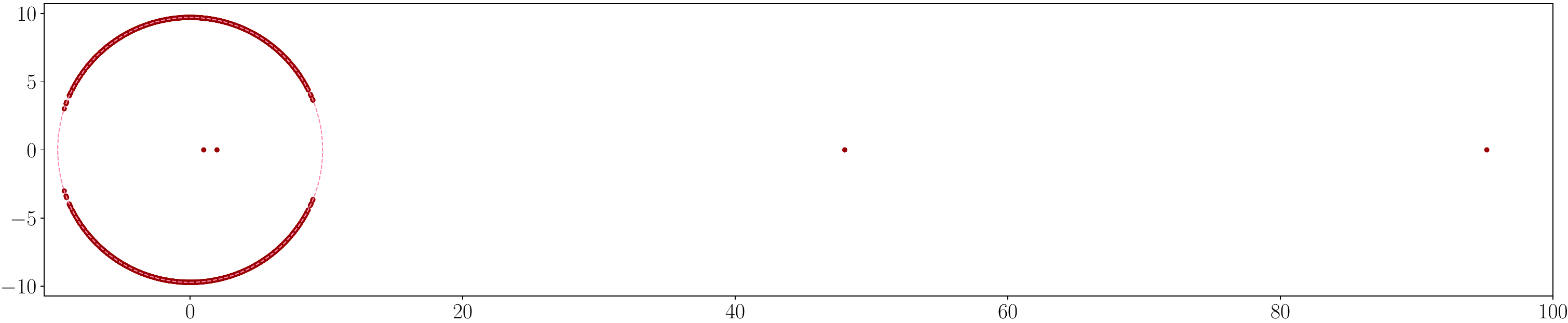}
\caption{The spectrum of $H_0$ for the two graphs whose non-backtracking spectrum is depicted in Figure \ref{figure}. The circle of radius $\sqrt{\alpha-1}$ is in pink. }\label{figure2}
\end{figure}

\begin{proposition}\label{thm:specH_0}
Under assumption \eqref{condition:sparsity}  with high probability the following holds for $H_0$.
	\begin{enumerate}
		\item The two eigenvalues with greater modulus, $\lambda_1(H_0)$ and $\lambda_2(H_0)$, are real, and they satisfy 
		\[\lambda_1(H_0) =  \alpha+O(
		\alpha^{3/4}) \qquad \text{and} \qquad \lambda_2(H_0) = \beta+O(\alpha^{3/4}). \]
		\item The two eigenvalues with smaller modulus, $\lambda_{2n}(H_0)$ and $\lambda_{2n-1}(H_0)$, are real, and they satisfy
\[\lambda_{2n-1}(H_0)=\frac{\alpha}{\beta}+O(\alpha^{-1/4}),\quad \lambda_{2n}(H_0)=1+O(\alpha^{-1/4}).\]
\item All the other $2n-4$ eigenvalues have modulus smaller than $ \sqrt{\alpha} +o(\sqrt \alpha )$. Among them, complex eigenvalues lie on a circle of radius $\sqrt{\alpha-1}$ and real ones lie in the intervals $[\sqrt\alpha-o(\sqrt{\alpha}),\sqrt\alpha+o(\sqrt{\alpha}) ]$ and $[-\sqrt\alpha-o(\sqrt{\alpha}),-\sqrt\alpha+o(\sqrt{\alpha})]$.
	\end{enumerate}
\end{proposition}

\begin{proof}
We use Proposition 
\ref{thm:concentrateA} and the link described before between the spectrum of $A$ and the spectrum of $H_0$  {given in \eqref{eigenvaluesHzero:complex} and \eqref{eigenvaleus Hzero:real}.}

The greatest eigenvalue of $A$ is
$\lambda_1=\lambda_1(A)=\alpha+O(\sqrt{\alpha})$,   from \eqref{eigenvaluesHzero:complex}, it gives rise to two real eigenvalues of $H_0$:
\begin{align*}\mu_1&=\frac{\lambda_1+\sqrt{\lambda_1^2-4\gamma}}{2}= \alpha+O(\alpha^{3/4}), \qquad  \mu_2 =\frac{\alpha-1}{\mu_1}=1+O(\alpha^{-1/4}).
\end{align*}

The second greatest eigenvalue ( {in absolute value}) of $A$ is $\lambda_2=\lambda_2(A)=\beta+O(\sqrt{\alpha})$,  which gives rise to two real eigenvalues of $H_0$:
\begin{align*}
	\mu_3 &=\frac{\lambda_2+\sqrt{\lambda_2^2-4\gamma}}{2}=\beta+O(\alpha^{3/4}),\quad \mu_4 =\frac{\alpha-1}{\mu_3}=\frac{\alpha}{\beta}+O( \alpha^{-1/4}).
\end{align*}

For the eigenvalues $\lambda_k$ of $A$ with $2\sqrt{\gamma}<|\lambda_k|\leqslant (2+o(1))\sqrt{\alpha}$, from \eqref{eigenvaleus Hzero:real}, the same argument gives
\begin{align}\label{eq:mu2k}
	|\mu_{2k}|&=\left|\frac{\lambda_k+\sqrt{\lambda_k^2-4\gamma}}{2}\right| = (1+o(1))\sqrt{\alpha},\qquad 
|\mu_{2k+1}|=\left|\frac{\alpha-1}{\mu_{2k}}\right|=(1+o(1))\sqrt{\alpha}.	
\end{align}

Finally, from \eqref{eigenvaluesHzero:complex}, all eigenvalues of $A$ with $|\lambda_k|\leqslant 2\sqrt{\gamma}$ give rise to two complex conjugate eigenvalues of $H_0$ with magnitude $\sqrt{\gamma}=\sqrt{\alpha-1}$. This completes the proof.
\end{proof}

The preceding description in Proposition \ref{thm:specH_0} also shows that $H_0$ is non-singular, and we can  quickly describe the eigenvalues of $H_0^{-1}$.
 {  We will later show that the norm of $(H^{-1}-H_0^{-1})$ is very small in Section \ref{sec:proof}. The strategy is then to apply  Theorem \ref{thm:BFQEP} to $H_0^{-1}$ and $H^{-1}$, which gives a more  precise estimate on the location of the outliers in $H$. See Remark \ref{remark:comparision} for further discussion.}

\begin{corollary}[inverse spectrum] \label{cor:corol_inv}
Under the  assumption \eqref{condition:sparsity}, with high probability, in the spectrum of the matrix $H_0^{-1}$ there are exactly two real outliers 
\begin{align}
 \zeta_1 &= \frac{1}{1+O(\alpha^{-1/2})}=1+o(1), \label{eq:zeta1}\\
\zeta_2 &= \frac{1}{\frac{\alpha}{\beta}+O( \alpha^{-1/2})}=\frac{\beta}{\alpha}+o(1),	 \label{eq:zeta2}
\end{align}
and all the other eigenvalues of $H_0^{-1}$ have modulus smaller than $(\sqrt{\alpha}(1+o(1))^{-1}=o(1)$. 
\end{corollary}
\begin{proof}
 {The location of the two real outliers in the spectrum of $H_0^{-1}$ comes from part (2) in Proposition \ref{thm:specH_0}. The location of all the other eigenvalues comes from part (1) and (3) in Proposition \ref{thm:specH_0}.}
\end{proof}
We now turn to the description of the global behavior of the spectrum of $A$.

\subsection{Limiting spectral distribution of A}

If we have an SBM with two  blocks of equal size, and $p,q=\omega(1/n)$, the empirical spectral distribution of $\frac{A}{\sqrt{\alpha}}$ will converge weakly to the semicircle law: for any bounded continuous test function $f$,  almost surely
\begin{equation}\label{convergence}
\frac{1}{n}\sum_{i=1}^n f(\lambda_i(A/\sqrt{\alpha})) \to \frac{1}{2\pi}\int_{-2}^2 f(t)\sqrt{4-t^2}\mathrm{d}t. 
\end{equation}
 This can be seen from the graphon representation of SBMs and the result for generalized Wigner matrices (Section 4 in \cite{zhu2018graphon}), since each row in $\mathbb EA$ has the same row sum or equivalently, each vertex has the same expected degree. If the degree is not homogeneous, then the limiting spectral distribution will not be the semicircle law.  We recall the following result from \cite{zhu2018graphon} for generalized Wigner matrices, which includes the  regime where the sparsity parameter is $\omega(1/n)$.
 
\begin{theorem}[Theorem 4.2. in \cite{zhu2018graphon}] Let $A_n$ be a random Hermitian matrix such that entries on and above the diagonal are independent and satisfy the following conditions:
\begin{enumerate}
	\item $\mathbb E[a_{ij}]=0,\quad \mathbb E|a_{ij}|^2=s_{ij}$.
	\item $\frac{1}{n}\sum_{j=1}^n s_{ij}=1+o(1)$ for all $i\in [n]$.
	\item For any constant $\eta>0$, $\displaystyle \lim_{n\to\infty}\frac{1}{n^2}\sum_{1\leqslant i,j\leqslant n} \mathbb E[|a_{ij}|^2 \mathbf{1}(|a_{ij}|\geqslant \eta\sqrt{n})]=0.$
	\item $\sup_{ij}s_{ij}\leqslant C$ for a constant $C>0$. 
\end{enumerate}
Then the empirical spectral distribution of $\frac{A_n}{\sqrt n}$ converges weakly to the semicircle law almost surely, which means that on an event with probability $1$, the convergence \eqref{convergence} holds for any bounded continuous function $f$.
\end{theorem}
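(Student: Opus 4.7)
The statement is a generalized Wigner semicircle law for Hermitian matrices with independent entries whose variance profile has row sums asymptotically equal to $1$. I would prove it by the classical method of moments, in three steps: a Lindeberg-type truncation, a tree-peeling moment computation, and a variance bound for almost-sure convergence.

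First, I would truncate. For $\eta>0$, set
\[\hat a_{ij} \defeq a_{ij}\mathbf{1}(|a_{ij}|\leqslant \eta\sqrt n) - \mathbb{E}[a_{ij}\mathbf{1}(|a_{ij}|\leqslant \eta\sqrt n)],\]
so the truncated matrix $\hat A_n$ has entries bounded by $2\eta\sqrt n$ and variances $\hat s_{ij}=s_{ij}(1+o(1))$, uniformly in $i,j$, by (3) and (4). The Hoffman--Wielandt inequality bounds the squared 2-Wasserstein distance between the empirical spectral measures of $A_n/\sqrt n$ and $\hat A_n/\sqrt n$ by $\frac{1}{n^2}\|A_n-\hat A_n\|_{\mathrm F}^2$; its expectation is $\frac{1}{n^2}\sum_{i,j}\mathbb{E}|a_{ij}-\hat a_{ij}|^2$, which condition (3) forces to vanish as $n\to\infty$ and then $\eta\to 0$. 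Hence it suffices to prove the theorem for $\hat A_n$.

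Next, I compute expected moments. Expand
\[\mathbb{E}\!\left[\frac{1}{n}\mathrm{tr}(\hat A_n/\sqrt n)^{k}\right] = \frac{1}{n^{1+k/2}}\sum_{i_1,\dotsc,i_k\in[n]} \mathbb{E}[\hat a_{i_1 i_2}\hat a_{i_2 i_3}\cdots \hat a_{i_k i_1}]\]
and classify closed walks $i_1\to i_2\to\dotsb\to i_k\to i_1$ by the multigraph they trace. Since entries are centered and independent, only walks in which every edge is used at least twice contribute. For $k$ odd the sum is $o(1)$ via the entry bound $2\eta\sqrt n$ together with (4). For $k=2m$ the dominant contribution comes from \emph{tree walks} tracing a rooted plane tree with $m$ edges used exactly twice each; all other shapes lose at least one factor of $n$. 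Each tree walk contributes $\prod_{e=(a,b)}\hat s_{i_a i_b}$. Starting from a leaf and summing the innermost vertex using condition (2) in the form $\frac{1}{n}\sum_j \hat s_{ij}=1+o(1)$ uniformly in $i$, I would peel off one vertex at a time; after $m$ iterations the sum equals $n^{m+1}(1+o(1))$ times the Catalan number $C_m$, which is the $2m$-th semicircle moment.

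Finally, a parallel walk-pair expansion for $\mathrm{Var}\bigl(\frac{1}{n}\mathrm{tr}(\hat A_n/\sqrt n)^k\bigr)$ shows that independent walk pairs cancel with $\mathbb{E}^2$, while pairs sharing at least one edge cost a factor of $n$ relative to the squared mean, yielding a bound $O(n^{-2})$. Borel--Cantelli then promotes moment convergence to almost sure. Combined with tightness (via an a.s.\ operator-norm bound $\|\hat A_n/\sqrt n\|\leqslant 2+o(1)$, which also follows from the same combinatorics) and the fact that the semicircle is determined by its moments, this gives weak convergence a.s.\ for every $f\in C_b(\mathbb R)$. The main obstacle is the tree-peeling step: condition (2) is only an average row normalization, so one must verify that the $(1+o(1))$ factor produced at each leaf-summation is uniform enough in the remaining vertex labels to compose without accumulating a nonnegligible error across the $m$ peelings. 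This uniformity is precisely what the pair of conditions (2) and (4) provides; condition (3) plays no role beyond the initial truncation.
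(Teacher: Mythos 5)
The paper does not prove this theorem; it is stated as a quoted result from \cite{zhu2018graphon}, so there is no in-paper argument to compare against. Your sketch is therefore best judged on its own merits, and it is the standard moment-method route to a generalized Wigner semicircle law. The outline is correct: a Lindeberg truncation justified by condition (3) and Hoffman--Wielandt; a closed-walk expansion of $\frac{1}{n}\mathrm{tr}(\hat A_n/\sqrt n)^k$ in which only walks using each edge at least twice survive centering, with odd moments vanishing and the $2m$-th moment dominated by rooted plane trees counted by the Catalan number $C_m$; and a variance bound of order $n^{-2}$ on trace moments, which Borel--Cantelli upgrades to almost-sure convergence, with compact support of the semicircle closing the moment problem. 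Two small points. First, your worry about accumulating errors in the tree-peeling step is resolved by reading condition (2) as uniform in $i$ (which it must be for the statement to have content): the $(1+o(1))$ produced at each of the $m$ leaf summations is then uniform in the remaining labels, and for fixed $m$ the product $(1+o(1))^m$ is still $1+o(1)$. Second, condition (4) is doing double duty in your argument: it is needed both to control the contributions of non-tree (cycle-containing) walks and walks with higher edge multiplicities in the moment count, and to keep the post-truncation variances comparable to $s_{ij}$; it is worth making that explicit. As a point of comparison with the cited source, \cite{zhu2018graphon} derives this statement within a graphon framework for Wigner-type matrices, which packages the variance-profile hypotheses as a step-graphon and invokes graph-limit machinery rather than running the trace-moment combinatorics directly; your argument is more elementary and self-contained but proves the same conclusion.
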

We obtain the following theorem for the adjacency matrix $A$ of an SBM, and also for $H_0$. 

\begin{theorem}\label{thm:bulkH0}
Assume $\alpha\to\infty$ and $p,q\to 0$.  The empirical spectral distribution of $\frac{A}{\sqrt{\alpha}}$ converges weakly to the semicircle law supported on $[-2,2]$ almost surely.	

Moreover, the empirical spectral distribution of $\frac{H_0}{\sqrt \alpha}$ converges weakly  almost surely to a distribution on the circle of radius $1$, and the limiting distribution of the real part of the eigenvalues of $H_0$ is the semicircle law rescaled on $[-1,1]$.
\end{theorem}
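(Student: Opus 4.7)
For the first statement, I would apply the cited Theorem 4.2 to a rescaled and centered version of $A$. Set $M_n := \sqrt{n/\alpha}\,(A - \mathbb{E}[A])$, whose off-diagonal entries are independent with mean zero and variances $s_{ij} = np_{ij}(1-p_{ij})/\alpha$, where $p_{ij}\in\{p,q\}$. Condition~(2) follows from the exact identity $\sum_j p_{ij}=\alpha$ together with $\sum_j p_{ij}^2 \le \max(p,q)\cdot\alpha=o(\alpha)$, giving $n^{-1}\sum_j s_{ij}=1+o(1)$. Condition~(4) follows from $s_{ij}\le np_{ij}/\alpha\le 3$ for large $n$, using $\alpha\ge n\max(p,q)/3$. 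Condition~(3) is vacuous because $|M_{ij}|\le\sqrt{n/\alpha}=o(\sqrt{n})$, so the truncation indicator eventually vanishes for every $\eta>0$. Theorem 4.2 then yields almost sure weak convergence of the ESD of $M_n/\sqrt{n}=(A-\mathbb{E}[A])/\sqrt{\alpha}$ to the semicircle on $[-2,2]$. Since $\mathbb{E}[A]$ has rank $2$, Weyl's interlacing provides a deterministic $O(1/n)$ bound on the Kolmogorov distance between the ESDs of $A/\sqrt\alpha$ and $(A-\mathbb{E}[A])/\sqrt\alpha$, so the limit is preserved.

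For the second statement, I would combine the first part with the explicit relation \eqref{eigenvaluesHzero:complex}: each $\lambda_k(A)$ with $|\lambda_k|<2\sqrt\gamma$ produces a conjugate pair of eigenvalues of $H_0$ of modulus exactly $\sqrt\gamma$ and real part $\lambda_k/2$. By Proposition~\ref{thm:concentrateA} only two outliers of $A$ exceed $(2+o(1))\sqrt\alpha$, and by the first-part semicircle convergence together with continuity of the semicircle density at $\pm 2$, the number of $\lambda_k$ in the thin edge shell $\{2\sqrt\gamma<|\lambda|\le(2+o(1))\sqrt\alpha\}$ is $o(n)$. Hence all but $o(n)$ eigenvalues of $H_0/\sqrt\alpha$ lie on the circle of radius $\sqrt\gamma/\sqrt\alpha = 1 - O(1/\alpha)$, so the limiting empirical measure (once identified) is supported on the unit circle.

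To identify it, fix a bounded continuous $\phi:\mathbb{C}\to\mathbb{R}$ and define $\psi_\phi:[-2,2]\to\mathbb{R}$ by
\[
\psi_\phi(x) := \phi\left(\tfrac{x}{2}+\ic\sqrt{1-\tfrac{x^2}{4}}\right) + \phi\left(\tfrac{x}{2}-\ic\sqrt{1-\tfrac{x^2}{4}}\right),
\]
which is bounded and continuous on $[-2,2]$. Uniform continuity of $\phi$ on compact sets together with the expansion $\sqrt{4\gamma/\alpha - x^2}=\sqrt{4-x^2}+O(1/\alpha)$ uniformly on $|x|\le 2-\varepsilon$ yields
\[
\frac{1}{2n}\sum_{k=1}^{2n}\phi\left(\frac{\mu_k(H_0)}{\sqrt\alpha}\right) = \frac{1}{2n}\sum_{k=1}^{n}\psi_\phi\left(\frac{\lambda_k(A)}{\sqrt\alpha}\right)+o(1),
\]
where the $o(1)$ absorbs the finitely many outliers and the edge shell. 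The right-hand side converges almost surely to $\tfrac{1}{2}\int_{-2}^{2}\psi_\phi\,d\sigma_{sc}$ by the first claim, identifying the limit measure of $H_0/\sqrt\alpha$ as the pushforward of $\tfrac{1}{2}\sigma_{sc}$ under the two-valued map $x\mapsto x/2\pm\ic\sqrt{1-x^2/4}$, which is supported on the unit circle. Specialising $\phi(z) = f(\mathrm{Re}(z))$ gives $\psi_\phi(x)=2f(x/2)$, so the ESD of the real parts of $H_0/\sqrt\alpha$ converges almost surely to the pushforward of $\sigma_{sc}$ under $x\mapsto x/2$, namely the semicircle rescaled to $[-1,1]$. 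The main technical point is the uniform control of the shrinking edge shell, handled via the integrability of the semicircle density near $\pm 2$.
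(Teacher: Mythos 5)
Your proof takes essentially the same route as the paper: for the first claim, centre and rescale $A$, apply the generalized-Wigner Theorem 4.2 of \cite{zhu2018graphon}, and then remove the mean by a low-rank perturbation argument; for the second, push the semicircle limit through the explicit quadratic map relating the spectra of $A$ and $H_0$.

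One small slip to flag in the first part: $\mathbb{E}[A]$ itself does \emph{not} have rank $2$. Because the diagonal of $A$ is forced to zero, $\mathbb{E}[A] = (\mathbb{E}[A]+pI)-pI$, and it is $\mathbb{E}[A]+pI$ that has rank $2$ (with nonzero eigenvalues $\alpha+p$ and $\beta+p$), while $\mathbb{E}[A]$ is full-rank with $n-2$ eigenvalues equal to $-p$. This is exactly how the paper writes the decomposition. Your interlacing step therefore does not apply literally to $\mathbb{E}[A]$; you need to treat the rank-$2$ part by interlacing and absorb the residual $-pI/\sqrt{\alpha}$ as a uniform $o(1)$ shift of every eigenvalue (harmless since $p\to 0$). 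The fix is one line, but as written the rank claim is false.

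On the second part you are in fact \emph{more} careful than the paper. The paper asserts $\RE\,\mu_{2i-1}(H_0)=\RE\,\mu_{2i}(H_0)=\lambda_i(A)/2$ for every $i$ and passes straight to the identity
$\frac{1}{2n}\sum_{j}\delta_{\RE(\mu_j/\sqrt\alpha)}=\frac{1}{n}\sum_i\delta_{\lambda_i/2\sqrt\alpha}$,
but that real-part identity is only true for the complex-conjugate pairs coming from $|\lambda_i|<2\sqrt\gamma$; for the $O(1)$ outliers and the edge-shell eigenvalues the two roots of $z^2-\lambda_i z+\gamma$ are real and distinct, so their real parts differ. You correctly isolate these as an $o(n)$ exceptional set (outliers plus edge shell), argue via weak convergence and continuity of the semicircle density at $\pm 2$ that they do not affect the limit, and then identify the limit by the test-function computation with $\psi_\phi$. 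This is a rigorous implementation of the step the paper carries out somewhat informally, and the specialisation $\phi(z)=f(\RE z)$ giving $\psi_\phi(x)=2f(x/2)$ recovers the rescaled semicircle exactly as claimed.
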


\begin{proof}
We first consider the centered and scaled  matrix \[M := (m_{ij})_{1\leqslant i,j\leqslant n}=\frac{A-\mathbb EA}{\sqrt {(p+q)/2}}.\]
For $i\not=j$  we have
\[ \mathbb E[|m_{ij}|^2]=\begin{cases}
	\frac{q(1-q)}{(p+q)/2} & \text{if $i,j$ are in the same block,}\\
	\frac{p(1-p)}{(p+q)/2} &\text{otherwise.}
\end{cases} \\
\]
Then for all $i\in [n]$, \begin{align*}
	\frac{1}{n}\sum_{j}\mathbb E[|m_{ij}|^2]&=\frac{1}{n}\left( \left(\frac{n}{2}-1\right)\frac{p(1-p)}{(p+q)/2}+\frac{n}{2}\frac{q(1-q)}{(p+q)/2}\right) =1+o(1).
\end{align*}

 One can quickly check that all the conditions in Theorem 3.4 hold for $M$. Therefore the empirical spectral distribution of $$\frac{M}{\sqrt n}=\frac{A-(\mathbb EA+p I)}{\sqrt{n(p+q)/2}}+\frac{pI}{\sqrt{n(p+q)/2}}$$
 converges weakly to the semicircle law. Or equivalently the empirical spectral distribution of $ \frac{A-(\mathbb EA+pI)}{\sqrt{\alpha}}$ converges weakly almost surely to the same distribution. Finally, since the rank of the matrix $(\mathbb EA+pI)$ is $2$ and  by the Cauchy interlacing theorem  for the eigenvalue of Hermitian matrices, the empirical spectral distribution of $\frac{A}{\sqrt {\alpha}}$ converges weakly to the semicircle law almost surely.

\bigskip

We now turn to the second part of the theorem. Note that from \eqref{eq:detHzero}, one eigenvalue $\lambda_i(A)$ corresponds to two eigenvalues of $H_0$ momentarily denoted by $\mu_{2i-1}(H_0),\mu_{2i}(H_0)$, and such that 
\[ 
	\RE~ \mu_{2i-1}(H_0)=\RE~ \mu_{2i}(H_0)=\frac{\lambda_i(A)}{2}.
\]
The empirical spectral distribution of the real parts of eigenvalues of $H_0/\sqrt{\alpha}$ satisfies
$$
	\frac{1}{2n}\sum_{j=1}^{2n}\delta_{\RE{}(\mu_j/\sqrt{\alpha})} =\frac{1}{n}\sum_{i=1}^n \delta_{\lambda_i/2\sqrt{\alpha}},
$$
which converges weakly almost surely to the semicircle law rescaled on $[-1,1]$ by the first part of the theorem.
\end{proof}

\subsection{Concentration of the degrees}\label{sec:degrees}

We finally describe the degrees in the SBM. Let us note $d_i$ the degree of vertex $i$ in the SBM graph.  {Under the assumption \eqref{condition:sparsity}, we have $\alpha=\omega(\log n)$}, and the degrees are highly concentrated in the following sense.
\begin{lemma}\label{lemma:normE}
With high probability $\displaystyle \max_{i \in [n]}|d_i-\alpha|=o(\alpha).$	
\end{lemma}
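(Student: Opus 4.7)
The plan is to prove concentration for each individual degree via a standard Chernoff/Bernstein inequality, and then take a union bound over the $n$ vertices. This is feasible precisely because $\alpha = \omega(\log n)$, which gives us enough slack to absorb the $\log n$ factor coming from the union bound.

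First, I would fix a vertex $i$ and write $d_i = \sum_{j \neq i} A_{ij}$ as a sum of independent Bernoulli random variables, where $(n/2 - 1)$ of them have parameter $p$ (the neighbors in the same block) and $n/2$ have parameter $q$ (the neighbors in the other block). In particular $\mathbb{E}[d_i] = \alpha$ exactly, and $\mathrm{Var}(d_i) \leqslant \alpha$. The classical multiplicative Chernoff bound for sums of independent Bernoullis then gives, for any $t \in (0,1)$,
\begin{equation*}
\mathbb{P}\bigl(|d_i - \alpha| > t\alpha\bigr) \leqslant 2\exp\bigl(-c\, t^2 \alpha\bigr)
\end{equation*}
for some absolute constant $c > 0$.

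Next, I would choose $t = t_n$ tending to $0$ slowly enough that $t_n^2 \alpha = \omega(\log n)$; such a choice exists because the assumption \eqref{condition:sparsity} gives $\alpha = \omega(\log n)$, so for instance $t_n = (\log n / \alpha)^{1/4}$ works (then $t_n \to 0$ while $t_n^2 \alpha = \sqrt{\alpha \log n} = \omega(\log n)$). A union bound over the $n$ vertices then yields
\begin{equation*}
\mathbb{P}\Bigl(\max_{i \in [n]} |d_i - \alpha| > t_n \alpha \Bigr) \leqslant 2n \exp(-c\, t_n^2 \alpha) = 2 \exp\bigl(\log n - c\, \omega(\log n)\bigr) = o(1),
\end{equation*}
which is exactly the desired statement $\max_i |d_i - \alpha| = o(\alpha)$ with high probability.

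There is essentially no hard step here; the only substantive point is the calibration of $t_n$, and this is where the hypothesis $\alpha = \omega(\log n)$ is used in an essential way (the lemma would fail in the dilute regime $\alpha = O(1)$, which is consistent with the discussion the authors give after the statement of Theorem \ref{thm:main} about the necessity of degree concentration for their perturbation approach).
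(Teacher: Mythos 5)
Your proof is correct and is essentially the same as the paper's: a union bound over vertices, a standard exponential concentration inequality for a sum of independent Bernoullis, and a calibration of the deviation threshold $t_n\alpha$ exploiting $\alpha=\omega(\log n)$. The paper uses Bernstein's inequality (phrased via the variance bound $\sigma_n^2\leqslant\alpha$) where you invoke multiplicative Chernoff, but in this Bernoulli setting with small deviations the two are interchangeable, and the structure of the argument is identical.
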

\begin{remark}
 Note that Lemma \ref{lemma:normE} is no longer true in other regimes.  {When $\alpha=O(\log n)$, the event $d_i=(1+o(1))\alpha$ for all $i\in [n]$ does not happen with high probability (see for example \cite[Chapter 3]{bollobas2001random}). Then the diagonal degree matrix $D$ is not close to $\alpha I$, which is a barrier for our perturbation analysis to work.}    
\end{remark}

Lemma \ref{lemma:normE} can be found in the literature, but we provide a proof for completeness. We  recall Bernstein's inequality: \emph{let $Y_n=\sum_{i=1}^n X_i$ where $X_i$ are independent random variables such that $|X_i|\leqslant b$. Define $\sigma_n^2 := \mathrm{Var}(Y_n)$. Then for any $x>0$,
	\begin{align*}
	\mathbb P(|Y_n-\mathbb E[Y_n]|\geqslant x)\leqslant 2\exp \left(\frac{-x^2}{2(\sigma_n^2+b x/3)}\right).	
	\end{align*}
	}
Now we prove Lemma \ref{lemma:normE}.
\begin{proof} Each $d_i$ has the same distribution with mean $\alpha$, hence we can apply the union bound and get
\begin{align}\label{eq:normE_proof}
\mathbb P\left(\max_{i\in [n]}|d_i-\alpha|\geqslant x \right)\leqslant \sum_{i \in [n]}\mathbb{P}(|d_i-\alpha|\geqslant x) = n \mathbb{P}(|d_1 - \alpha|\geqslant x).
\end{align}
Let us write $d_1 = X_2+\dotsb + X_{n/2}+X_{n/2+1}+\dotsb + X_n$, where the $X_i$ are independent, and $X_i$ is a Bernoulli random variable with parameter $p$ if $i \in \{2, \dotsc, n/2 \}$ and $q$ if $i \in \{n/2+1, \dotsc, n\}$. Those variables are all bounded by $1$ so we can take $b=1$ in Bernstein's inequality. The variance is
\[
	\sigma_n^2= \left(\frac{n}{2}-1 \right)p(1-p)+\frac{n}{2}q(1-q)\leqslant \alpha.
	\] 
	From  \eqref{eq:normE_proof} and Bernstein's inequality we have
	\begin{align*}
	 \mathbb P( |d_1-\alpha|\geqslant x)\leqslant 2\exp \left( \frac{-x^2}{2(\alpha+x/3)} \right).
	\end{align*}
Let $h(n)$ be any sequence of positive numbers. The choice $x=\frac{\alpha}{h(n)}$ then leads to 
	\begin{align*}
	\mathbb P\left(\max_{i\in[n]}|d_i-\alpha|\geqslant x\right) \leqslant 2\exp \left(\log n - \frac{\alpha}{2h^2(n)+2h(n)/3}\right).		
	\end{align*}
	
Since we know $\alpha=\omega(\log n)$, any choice of $h(n)$ growing to $\infty$ slowly enough will be sufficient; for instance if $\alpha=\log(n)f(n)$ with $f(n) \to \infty$, we take $h(n)=f(n)^{1/3}$ and we obtain that $\max_{i\in[n]}|d_i-\alpha| = o(\alpha)$ with probability $1-o(1)$.
\end{proof}

\section{Proof of Theorem \ref{thm:main}}\label{sec:proof}

\subsection{Existence of bulk insiders}
In this section we prove the existence of the isolated eigenvalues inside the bulk. To do this, we compare the spectrum of $H$ (defined in \eqref{def:H}) and $H_0$ (defined in \eqref{eq:defH_0}). We also need to compare the spectrum of $H^{-1}$ and $H_0^{-1}$ to have a more refined estimate compared to \cite{wang2017limiting}. See Remark \ref{remark:comparision} for further discussion.

Fix any non-singular square matrix $X$. One can easily check that 
\begin{align}\label{eq:invert}
\begin{bmatrix}
A & -X \\ I & 0
\end{bmatrix}^{-1}=\begin{bmatrix}
0  & I  \\ -X^{-1} & X^{-1} A
\end{bmatrix}
\end{align}
and by conjugation  the spectrum of this matrix is the same as the spectrum of the matrix 
\[
	\begin{bmatrix}
	X^{-1}A & - X^{-1} \\ I & 0
	\end{bmatrix}.
\]
Let us introduce the matrices
\begin{equation}\label{def:K}
K = \begin{bmatrix}
(D-I)^{-1} A & -(D-I)^{-1} \\ I & 0 
\end{bmatrix}\qquad \text{and} \qquad  K_0 = \begin{bmatrix}
(\alpha - 1)^{-1} A & -(\alpha-1)^{-1}I \\ I & 0 
\end{bmatrix}.
\end{equation}
These matrices have the same spectrum as (respectively) $H^{-1}$ and $H_0^{-1}$ and we are going to apply Theorem \ref{thm:BFQEP} to them. First, one has to note that the spectrum of $K$ is indeed bounded away from zero. More precisely, all the eigenvalues of $H$ are bounded below by $1$, as explained in the following  statement (Theorem 3.7 in \cite{angel2015non}, the same result for finite graphs was first given in \cite{kotani2000zeta}): \emph{let $d_{\min}\geq  2$ and $d_{\max}$ be the minimal and maximal degrees of some finite or infinite graph $G$. Then the spectrum of $B$ is included in
	\begin{align}
 \left\{ \lambda\in \mathbb C\setminus \mathbb R: \sqrt{d_{\min}-1}\leqslant |\lambda| \leqslant \sqrt{d_{\max}-1} \right\}\cup \{\lambda\in \mathbb R: 1\leqslant |\lambda|\leqslant d_{\max}-1	\}.
	\end{align}
}

We see from Lemma \ref{lemma:normE} that with high probability all the degrees in our graph are greater than $2$, hence every eigenvalue of $H$ has modulus greater than $1$, thus ensuring that every eigenvalue $\mu$ of $H^{-1}$ has $|\mu|\leqslant 1$. We now apply Theorem \ref{thm:BFQEP} to $K$ and $K_0$. It is easily seen from \eqref{def:K} that $K_0$ is QEP-diagonalizable and the change-of-basis matrix $P$ is unitary since $A$ is Hermitian. We take
 \begin{align}\label{eq:XYdef}
 X=(\alpha-1)^{-1}I,\quad \text{ and }\quad  Y=(D-I)^{-1}.
 \end{align} From Theorem \ref{thm:BFQEP}, we have
\begin{align*}
\varepsilon := \max_{\mu \in \spec(H^{-1})}\varepsilon (\mu) &= \max_{\mu \in \spec(H^{-1})}\sqrt{\kappa(P)} \sqrt{\Vert X-Y + \mu(Y-X)A \Vert}\\
&\leqslant \max_{\mu \in \spec(H^{-1})} \sqrt{\Vert X-Y \Vert (1+ |\mu| \Vert A \Vert)} \\
&\leqslant \max_{\mu \in \spec(H^{-1})} \sqrt{\Vert X-Y \Vert (1+  \Vert A \Vert)} \\
&\leqslant \sqrt{\Vert X-Y\Vert(1+\alpha(1+o(1))}, 
\end{align*}
where the last line holds with high probability from the description of the spectrum of $A$ in Proposition \ref{thm:concentrateA}. It turns out that $\varepsilon=o(1)$, as a consequence of the following lemma.

\begin{lemma} For $X$ and $Y$ defined in \eqref{eq:XYdef}, with high probability $\Vert X -Y \Vert = o(\alpha^{-1})$. 
\end{lemma}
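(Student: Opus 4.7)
The plan is to exploit the fact that both $X$ and $Y$ are diagonal matrices (since $D$ is the diagonal degree matrix), so $\|X-Y\|$ reduces to the largest absolute value of its diagonal entries. Concretely, I would compute
\[ X_{ii} - Y_{ii} = \frac{1}{\alpha-1} - \frac{1}{d_i - 1} = \frac{d_i - \alpha}{(\alpha-1)(d_i-1)}, \]
so that
\[ \|X-Y\| = \max_{i\in[n]} \frac{|d_i - \alpha|}{(\alpha-1)(d_i-1)}. \]

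Next I would invoke Lemma \ref{lemma:normE}, which guarantees that with high probability $\max_i |d_i - \alpha| = o(\alpha)$. In particular, uniformly over $i$, we have $d_i = \alpha(1 + o(1))$, hence $d_i - 1 = \alpha(1+o(1))$ (this uses $\alpha = \omega(\log n) \to \infty$, so in particular $d_i \geqslant 2$ with high probability, which is the same event ensuring the non-backtracking spectral bounds invoked just before the lemma).

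Plugging these estimates into the explicit formula gives
\[ \|X - Y\| \leqslant \frac{o(\alpha)}{(\alpha-1)\cdot \alpha(1+o(1))} = o(\alpha^{-1}), \]
which is exactly the claim. There is no real obstacle here: the lemma is essentially a direct combination of the algebraic identity for $X_{ii}-Y_{ii}$ with the concentration of degrees already established. The only thing to be slightly careful about is ensuring we condition on the high-probability event from Lemma \ref{lemma:normE} (and on the event that all degrees are bounded below, say by $2$, so that $D-I$ is invertible), but both happen simultaneously with probability $1-o(1)$.
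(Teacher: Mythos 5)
Your proof is correct and follows the same route as the paper's: reduce $\|X-Y\|$ to a max over the diagonal entries, rewrite each entry as $\frac{d_i-\alpha}{(\alpha-1)(d_i-1)}$, and invoke Lemma \ref{lemma:normE} to bound the numerator by $o(\alpha)$ and the denominator from below by $\alpha^2(1+o(1))$. The extra care you take about $d_i\geqslant 2$ and working on the intersection of the two high-probability events is a reasonable, if implicit in the paper, addition.
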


\begin{proof}Since $X,Y$ are diagonal matrices, we have 
\[
	\Vert X-Y \Vert= \max_{i \in [n]} \left|\frac{1}{d_i-1} - \frac{1}{\alpha-1}\right|=	\max_{i \in [n]} \frac{|d_i-\alpha|}{(d_i-1)(\alpha-1)} .
\]
By Lemma \ref{lemma:normE}, with high probability $\max_{i \in [n]}|d_i-\alpha|=o(\alpha)$ and this implies the lemma.
\end{proof}

We thus have $\varepsilon=o(1)$ and  now we can combine the `multiplicities preserved' part of Theorem \ref{thm:BFQEP} and the description of the spectrum of $K_0$ in Corollary \ref{cor:corol_inv}.

\begin{remark}\label{rmk:sparsity}
 {Recall $\zeta_1,\zeta_2$ from Corollary \ref{cor:corol_inv}.
  The crucial fact here is that $\zeta_1\approx 1$ and $\zeta_2 \approx \beta/\alpha$ are of order $1$ and in particular they are bounded away from $0$, which is guaranteed by the third inequality in our assumption \eqref{condition:sparsity}.}
\end{remark}

Theorem \ref{thm:BFQEP} implies that there is exactly one eigenvalue of $K$ in $\mathcal B(\zeta_1, \varepsilon)$, one in $\mathcal B(\zeta_2, \varepsilon)$ and all the other eigenvalues have modulus $o(1)$. In other words, there are exactly two eigenvalues $\xi_1, \xi_2$ of $H$ such that 
\begin{align*}
   \xi_1^{-1} = 1+o(1),\quad \xi_2^{-1} = \frac{\beta}{\alpha}+o(1),  
\end{align*}
and all the other ones have inverse modulus $o(1)$.  By the continuity of $x \mapsto x^{-1}$, we have exactly two eigenvalues of $H$,
\begin{align}\label{eq:omega1}
    \xi_1=1+o(1), \quad \xi_2=\frac{\alpha}{\beta}+o(1),
\end{align}
which are of order $1$,
and all the other eigenvalues of $H$  have inverse modulus $\omega(1)$.

Since $0$ is always an eigenvalue of the Laplacian $D-A$, we have $$1\in \{z\in \mathbb C: \det(z^2I-zA+D-I)=0\},$$ 
which implies $1$ is always an eigenvalue of $H$.
So $\xi_1$ is indeed exactly equal to $1$,  {otherwise we have three eigenvalues of $H$: $1, \xi_1$ and $\xi_2$ that are of order $1$,  a contradiction to \eqref{eq:omega1}}.   

Moreover, $\xi_2$ must be a real eigenvalue of $H$, otherwise from the fact that the spectrum of $B$ is symmetric with respect to the real line, we would see two eigenvalues of $K$ in the  ball $\mathcal B(\zeta_2,\varepsilon)$, which is a contradiction to Theorem \ref{thm:BFQEP}. This completes the proof of \eqref{eq:insiders}.

\subsection{Existence of the outliers}We now simply apply Theorem \ref{thm:BFQEP} to the matrices $H$ defined in \eqref{def:H} and $H_0$ defined in \eqref{eq:defH_0}. Here, $A=B$ and in fact we are in the setting of Corollary \ref{cor:corol1} with $X=(\alpha-1)I$ and  $Y=D-I$. Hence 
\[\varepsilon = \sqrt{\Vert X-Y\Vert}=\sqrt{\max_{i \in [n]}|d_i-\alpha|}=o(\sqrt{\alpha}) \]
with high probability from Lemma \ref{lemma:normE}. From the description of the spectrum of $H_0$ in Proposition \ref{thm:specH_0}, we see that there are two outliers located near $\beta$ and $\alpha$ and all other eigenvalues have order $O(\sqrt{\alpha})$. From this and  the `multiplicities preserved' part in Corollary \ref{cor:corol1}, we see that $H$ has two outliers located within distance $o(\sqrt{\alpha})$ of \[\lambda_1(H_0)=\alpha+O(\alpha^{3/4}), \quad \text{ and } \quad \lambda_2(H_0)=\beta+O(\alpha^{3/4}).\] By the symmetry of the spectrum with respect to the real line, those two outliers are real numbers. This completes the proof of \eqref{eq:outsiders}.

\begin{remark}\label{remark:comparision}
	Note that we could also use this strategy to infer the existence of the bulk insiders: in fact, the result would  yield the existence of two eigenvalues located in the balls $\mathcal B(1, \varepsilon)$ and $\mathcal B(\alpha/\beta, \varepsilon)$. These eigenvalues would be detached from the bulk of eigenvalues of $H$, which lie within distance $o(\sqrt{\alpha})$ of the circle of radius $\sqrt{\alpha}$; however, no further information can be inferred, since $o(\sqrt{\alpha})$ can  go to infinity as well. This is the reason why we had to compare $H^{-1}$ with $H_0^{-1}$, which has two effects: first, it isolates the two  `insiders' of $H_0$ and  the other eigenvalues close to zero; and secondly, it turns out that the norm of $H^{-1}-H_0^{-1}$ is very small.
In addition, our use of the specific Bauer-Fike theorem designed for QEP (Theorem \ref{thm:BFQEP}) yields more precise results than \cite{wang2017limiting}. 
\end{remark}

\subsection{Global spectral distribution}

We now prove the `bulk' part of Theorem \ref{thm:main}. The strategy is  the same as \cite{wang2017limiting} and we borrow their main theorem.

\begin{theorem}[Corollary 3.3. in \cite{wang2017limiting}]\label{thm:TaoVu}

	Let $M_m$ and $P_m$ be $m\times m$ matrices with entries in complex numbers, and let $f(z,m)\geq 1$ be a real function depending on $z,m$. Let $\mu_M$ be the empirical spectral distribution of any square matrix $M$. Assume that
	\begin{align}\label{eq:Cond1}
	\frac{1}{m}\|M_m\|_F^2+\frac{1}{m}\|M_m+P_m\|_F^2 	
	\end{align} is bounded in probability,
and 
\begin{align}\label{eq:Cond2}
	f(z,m) \|P_m\|\to 0
\end{align}
in probability, and  for almost every complex number $z\in\mathbb C$,
\begin{align}\label{eq:Cond3}
	\|(M_m-zI)^{-1}\|\leqslant f(z,m),
\end{align}
with probability tending to $1$, then $\mu_{M_m}-\mu_{M_m+P_m}$  converges in probability to zero.
\end{theorem}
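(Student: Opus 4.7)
The plan is to run Girko's Hermitization combined with a determinantal perturbation identity. Weak convergence (in probability) of the signed measure $\mu_{M_m} - \mu_{M_m + P_m}$ to zero reduces to two ingredients: (a) tightness (in probability) of both sequences $(\mu_{M_m})$ and $(\mu_{M_m + P_m})$, and (b) pointwise convergence in probability of the logarithmic potentials
\begin{equation*}
L_M(z) \defeq \int \log|w-z|\, d\mu_M(w) \;=\; \tfrac{1}{m}\log\bigl|\det(M - zI)\bigr|,
\end{equation*}
for Lebesgue-almost every $z \in \mathbb{C}$. Condition (a) is immediate from \eqref{eq:Cond1}: since $\int|w|^2\, d\mu_M(w)\leqslant \tfrac{1}{m}\|M\|_F^2$ (squared eigenvalues are dominated by squared singular values in the Schur form), Markov's inequality yields uniform tightness from the hypothesis.

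For (b), I would exploit the determinantal factorization
\begin{equation*}
M_m + P_m - zI \;=\; (M_m - zI)\bigl(I + E_m\bigr), \qquad E_m \defeq (M_m - zI)^{-1} P_m,
\end{equation*}
valid off the random spectrum of $M_m$; for a fixed $z$, one has $z \notin \spec(M_m)$ with probability tending to $1$ by \eqref{eq:Cond3}. Taking $\tfrac{1}{m}\log|\det(\cdot)|$ yields
\begin{equation*}
L_{M_m + P_m}(z) - L_{M_m}(z) \;=\; \tfrac{1}{m}\log\bigl|\det(I + E_m)\bigr|.
\end{equation*}
Hypotheses \eqref{eq:Cond2}--\eqref{eq:Cond3} give $\|E_m\|\leqslant f(z,m)\|P_m\|\to 0$ in probability. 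Once $\|E_m\|<1$, every eigenvalue $\lambda$ of $E_m$ satisfies $|1+\lambda|\in[1-\|E_m\|,1+\|E_m\|]$, so
\begin{equation*}
\Bigl|\tfrac{1}{m}\log\bigl|\det(I + E_m)\bigr|\Bigr| \;\leqslant\; -\log(1 - \|E_m\|) \;\longrightarrow\; 0 \text{ in probability},
\end{equation*}
which establishes (b) at any fixed point $z$ where \eqref{eq:Cond3} is assumed to hold, hence for a.e.\ $z$.

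The hard part is the upgrade from pointwise a.e.\ convergence of log-potentials to weak convergence of the signed ESDs. The log-potential $L_{M_m}$ is a non-local functional of the measure, singular both at infinity and near atoms of the spectrum, so pointwise convergence does not automatically transfer to test-function integrals. The Frobenius bound \eqref{eq:Cond1} controls the contribution from large $|w|$ (ensuring $\log|w-z|$ is $L^1$-dominated at the tail), while the resolvent hypothesis \eqref{eq:Cond3} forces the smallest singular value of $M_m - zI$ to exceed $1/f(z,m)$, so that $L_{M_m}(z)$ cannot develop a logarithmic singularity at $z$. Combining these two estimates in a standard uniform integrability / dominated convergence argument --- precisely the form of the Tao-Vu replacement machinery --- one concludes that $\mu_{M_m} - \mu_{M_m + P_m}$ tends weakly to zero in probability, as required.
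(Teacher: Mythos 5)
The paper supplies no proof of this statement: it is imported verbatim as Corollary~3.3 of \cite{wang2017limiting}, where it is in turn derived from the Tao--Vu replacement principle. Your sketch is a faithful reconstruction of that derivation, and the two quantitative steps are correct. The Schur-form inequality $\int|w|^2\,d\mu_M\leqslant \tfrac1m\|M\|_F^2$ converts \eqref{eq:Cond1} into tightness of both ESD sequences, and the factorization $M_m+P_m-zI=(M_m-zI)(I+E_m)$ with $E_m=(M_m-zI)^{-1}P_m$, combined with $\|E_m\|\leqslant f(z,m)\|P_m\|\to 0$ from \eqref{eq:Cond2}--\eqref{eq:Cond3}, gives $\bigl|L_{M_m+P_m}(z)-L_{M_m}(z)\bigr|\leqslant -\log(1-\|E_m\|)\to 0$ in probability for a.e.\ fixed $z$. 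This is exactly the pair of hypotheses that the replacement principle demands.

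The one place to tighten is your final paragraph, which slightly misattributes the roles of the two bounds in the upgrade step. The passage from pointwise a.e.\ convergence of log potentials to weak convergence of the signed ESDs does \emph{not} use the resolvent hypothesis to suppress a singularity of $L_{M_m}$; \eqref{eq:Cond3} is spent entirely at the pointwise stage you already handled. In the replacement principle one writes, for smooth compactly supported $\phi$, $\int\phi\,d(\mu_{M_m}-\mu_{M_m+P_m})=\tfrac1{2\pi}\int(\Delta\phi)\,(L_{M_m}-L_{M_m+P_m})$ via $\Delta_z\log|w-z|=2\pi\delta_w$, and the uniform integrability needed to pass the pointwise limit through the integral over the (compact) support of $\Delta\phi$ comes from \eqref{eq:Cond1} alone. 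So the argument is sound provided you explicitly invoke the Tao--Vu replacement principle at the final step, as \cite{wang2017limiting} does; presented instead as a routine dominated-convergence step it would be a genuine gap, since that uniform integrability estimate is the real content of the principle.
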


Recall $H$ from \eqref{def:H} and $H_0$ from \eqref{eq:defH_0}. Take \[M_m=\frac{H_0}{\sqrt\alpha},\quad P_m=\frac{1}{\sqrt{\alpha}}(H-H_0)=\frac{1}{\sqrt{\alpha}}\begin{bmatrix}
 0 & \alpha I-D\\
 0 & 0	
\end{bmatrix}
\]
in Theorem \ref{thm:TaoVu}. 
If all the conditions in Theorem \ref{thm:TaoVu} hold, then the `bulk' part of Theorem \ref{thm:main} follows from our Theorem \ref{thm:bulkH0}.  

The condition \eqref{eq:Cond1} follows verbatim from the proof of Lemma 3.7. in \cite{wang2017limiting}. Condition \eqref{eq:Cond2} follows from Lemma 3.9. in \cite{wang2017limiting} and our Lemma \ref{lemma:normE}. Condition \eqref{eq:Cond3} follows from Lemma 3.9. in \cite{wang2017limiting}. This completes the proof of global spectral distribution part of Theorem \ref{thm:main}.

\subsection*{Acknowledgements} We would like to thank Lorenzo Dall'Amico for sharing the conjecture in \cite{dall2019optimized} and helpful discussions. We also thank Ke Wang for a careful reading of this paper and her useful suggestions.  We are  grateful to the organizers of the conference Random Matrices and Random Graphs at CIRM, during which this work was initiated. Y.Z. is partially supported by NSF DMS-1949617.

 \bibliographystyle{plain}
\bibliography{ref.bib}

\end{document}